\documentclass[oneside,english]{amsart}
\usepackage[T1]{fontenc}
\usepackage[latin9]{inputenc}
\usepackage{amsthm}
\usepackage{amstext}
\usepackage{amssymb}
\usepackage{esint}
\usepackage{graphicx}
\usepackage{enumerate}

\usepackage{graphicx}
\graphicspath{{noiseimages/}}

\makeatletter
\newtheorem{theorem}{Theorem}[section]
\newtheorem{lemma}[theorem]{Lemma}

\numberwithin{figure}{section}
\theoremstyle{definition}

\theoremstyle{remark}
\newtheorem{remark}[theorem]{Remark}

\numberwithin{equation}{section}
\makeatother

\usepackage{babel}

	\DeclareMathOperator{\loc}{loc}
	
	\DeclareMathOperator*{\esssup}{ess\,sup}

\begin{document}

\title[Spectral Properties]{Spectral Estimates of the $p$-Laplace Neumann operator and Brennan's Conjecture}

\author{V.~Gol'dshtein, V.~Pchelintsev, A.~Ukhlov}

\begin{abstract}
In this paper we obtain estimates for the first nontrivial eigenvalue of the $p$-Laplace Neumann operator in bounded simply connected planar domains $\Omega\subset\mathbb R^2$. This study is based on a quasiconformal version of the universal weighted Poincar\'e-Sobolev inequalities obtained in our previous papers for conformal weights. The suggested weights in the present paper are Jacobians of quasiconformal mappings. The main technical tool is the theory of composition operators in relation with the Brennan's Conjecture for (quasi)conformal mappings. 
\end{abstract}
\maketitle
\footnotetext{\textbf{Key words and phrases:} elliptic equations, Sobolev spaces, quasiconformal mappings.} 
\footnotetext{\textbf{2010
Mathematics Subject Classification:} 35P15, 46E35, 30C65.}

\section{Introduction}

In this paper we obtain lower estimates for the first nontrivial eigenvalue of the $p$-Laplace Neumann operator with the Neumann boundary condition
\[
\begin{cases}
-\textrm{div}(|\nabla u|^{p-2}\nabla u)=\mu_p|u|^{p-2}u & \text{in $\Omega$}\\
\frac{\partial u}{\partial n}=0 & \text{on $\partial \Omega$},
\end{cases} 
\]
in a bounded simply connected planar domain $\Omega \subset \mathbb{R}^2$.
The weak statement of this spectral problem is as follows: a function $u$ solves the previous problem iff 
$u \in W^{1}_{p}(\Omega)$ and
$$
\int\limits _{\Omega} (|\nabla u(x)|^{p-2}\nabla u(x) \cdot \nabla v(x))\,dx =
\mu_p \int\limits _{\Omega} |u(x)|^{p-2} u(x) v(x)\,dx
$$ 
for all $v \in W^{1}_{p}(\Omega)$. 

We demonstrate that integrability of Jacobians of quasiconformal mappings permit us to obtain lower estimates of the first non-trivial eigenvalue $\mu_{p}^{(1)}(\Omega)$ in terms of Sobolev norms of a quasiconformal mapping of the unit disc $\mathbb{D}$ onto $\Omega$. So, we can conclude that $\mu_{p}^{(1)}(\Omega)$ depends on the quasiconformal geometry of $\Omega$:

\vskip 0.3cm
{\bf Theorem A.}
\textit{Let $\Omega\subset\mathbb R^2$ be a $K$-quasiconformal $\alpha$-regular domain and $\varphi : \Omega \to \mathbb D$ be a $K$-quasiconformal mapping. Suppose that the Brennan's Conjecture holds.  
Then for every
$$
p \in \left(\max \left\{\frac{4K}{2K+1},\frac{\alpha (2K-1)+2}{\alpha K}\right\} \,, 2\right)
$$ 
the following estimate
$$
\frac{1}{\mu_p^{(1)}(\Omega)} \leq K \|J_{\varphi^{-1}}\mid L_{\frac{\alpha}{2}}(\mathbb D)\| \inf_{q \in I}
\left\{
B^p_{\frac{\alpha p}{\alpha -2},q}(\mathbb D) 
\||D\varphi^{-1}|^{p-2}\mid L_{\frac{q}{p-q}}(\mathbb D)\| 
\right\}
$$
holds. }

Here $I=[1,2p/(4K-(2K-1)p))$ and $B_{r,q}(\mathbb D)$ is the best constant in 
the corresponding $(r,q)$-Poincar\'e-Sobolev inequality in the unit disc $\mathbb D$ for $r=\alpha p/(\alpha -2)$.

\vskip 0.2cm

Let us give few detailed comments to the theorem:
\vskip 0.2cm

\noindent
{\sc 1.1 $K$-quasiconformal $\alpha$-regular domains.}
Recall that a homeomorphism $\varphi:\Omega \rightarrow \widetilde{\Omega}$
between planar domains is called $K$-quasiconformal if it preserves
orientation, belongs to the Sobolev class $W_{2,\loc}^{1}(\Omega)$
and its directional derivatives $\partial_{\alpha}$ satisfy the distortion inequality
$$
\max\limits_{\alpha}|\partial_{\alpha}\varphi|\leq K\min_{{\alpha}}|\partial_{\alpha}\varphi|\,\,\,
\text{a.e. in}\,\,\, \Omega \,.
$$

The notion of conformal regular domains was introduced in~\cite{BGU} and was used for study conformal spectral stability of the Laplace operator. In the present work we introduce the more general notion of quasiconformal regular domains.

A simply connected planar domain $\Omega \subset \mathbb{R}^2$ is called a $K$-quasiconformal $\alpha$-regular domain if there exists a $K$-quasiconformal mapping $\varphi : \Omega \to \mathbb D$ such that
$$
\int\limits_\mathbb D |J(y, \varphi^{-1})|^{\frac{\alpha}{2}}~dy < \infty \quad\text{for some}\quad \alpha >2.
$$
The domain $\Omega \subset \mathbb{R}^2$ is called a $K$-quasiconformal regular domain if it is a $K$-quasiconformal $\alpha$-regular domain for some $\alpha>2$.

Note that class of quasiconformal regular domain includes the class of Gehring domains \cite{AK} and can be described in terms of quasihyperbolic geometry \cite{KOT}.

\begin{remark}
The notion of quasiconformal $\alpha$-regular domain is more general then the notion of conformal $\alpha$-regular domain. Consider, for example, the unit square $\mathbb Q\subset\mathbb R^2$. Then $\mathbb Q$ is a conformal $\alpha$-regular domain for $2<\alpha\leq 4$ \cite{GU16} and is a quasiconformal $\alpha$-regular domain for all $2<\alpha\leq \infty$ because the unit square $\mathbb Q$ is quasiisometrically equivalent to the unit disc $\mathbb D$.
\end{remark}

\begin{remark}
Because $\varphi: \Omega\to\mathbb D$ is a quasiconformal mapping, then integrability of the derivative is equivalent to integrability of the Jacobian:
$$
\int\limits_\mathbb D |J(y,\varphi^{-1})|^{\frac{\alpha}{2}}~dy\leq \int\limits_\mathbb D |D \varphi^{-1}(y)|^{\alpha}~dy 
\leq K^{\frac{\alpha}{2}}\int\limits_\mathbb D |J(y,\varphi^{-1})|^{\frac{\alpha}{2}}~dy.
$$
\end{remark}

\vskip 0.2cm

{\sc 1.2 Brennan's Conjecture.}
We can conclude from the theorem A that Brennan's Conjecture leads to the spectral estimates of the $p$-Laplace operator in quasiconformal $\alpha$-regular domains $\Omega\subset\mathbb R^2$.

\textbf{Generalized Brennan's Conjecture} for quasiconformal mappings \cite{HSS} states that
\begin{equation}
\int\limits _{\Omega}|D\varphi(x)|^{\beta}~dx<+\infty,\quad\text{for all}\quad\frac{4K}{2K+1}<\beta<\frac{2K\beta_0}{(K-1)\beta_0+2}.\label{eq:HSS1}
\end{equation}

If $K=1$ we have Brennan's conjecture for conformal mappings \cite{Br} which is proved for $\beta\in (4/3,\beta_0)$, where $\beta_0=3.752$ \cite{HShim}.

\vskip 0.2cm

{\sc 1.3 Historical sketch.}
In 1961 G.Polya  \cite{P60} obtained upper estimates for eigenvalues of Neumann-Laplace operator in so-called plane-covering domains. Namely, for the first eigenvalue:
$$
{\mu_2^{(1)}(\Omega)}\leq 4\pi|\Omega|^{-1}.
$$

The lower estimates for the $\mu_p^{(1)}(\Omega)$ were known before only
for convex domains. In the classical work \cite{PW} it was proved that if $\Omega$ is convex with diameter $d(\Omega)$ (see, also \cite{ENT,FNT,V12}), then
\begin{equation}
\label{eq:PW}
\mu_2^{(1)}(\Omega)\geq \frac{\pi^2}{d(\Omega)^2}.
\end{equation}

In \cite{ENT} was proved that
if $\Omega \subset \mathbb R^n$ is a bounded convex domain having diameter
$d$ then for $p \geq 2$ 
$$
\mu_p^{(1)}(\Omega) \geq \left(\frac{\pi_p}{d(\Omega)}\right)^p
$$
where
$$
\pi_p = 2 \int\limits_0^{(p-1)^{\frac{1}{p}}} \frac{dt}{(1-t^p/(p-1))^{\frac{1}{p}}} =
2 \pi \frac{(p-1)^{\frac{1}{p}}}{p \sin(\pi/p)}.
$$ 

In the case of non-convex domains in \cite{GU2016} was proved that if $\Omega \subset \mathbb R^2$ be a conformal $\alpha$-regular domain
then for every $p \in (\max \{4/3,(\alpha + 2)/\alpha\} \,, 2)$ the following inequality holds 
\begin{multline*}
\frac{1}{\mu_p^{(1)}(\Omega)} \leq \\
{} \inf_{q \in [1,2p/(4-p))} 
\left\{
B_{\frac{\alpha p}{\alpha -2},q}^p(\mathbb D)
\cdot \|D\varphi^{-1}|^{p-2}\,|\,L_{\frac{q}{p-q}}(\mathbb D)\| 
\right\}\|D\varphi^{-1}|\,|\,L_{\alpha}(\mathbb D)\|^2.
\end{multline*} 

\vskip 0.3cm

The first non-trivial eigenvalue of the Neumann boundary problem for the $p$-Laplace operator $\mu_p^{(1)}(\Omega)^{-\frac{1}{p}}$ is equal to the best constant $B_{p,p}(\Omega)$ (see, for example, \cite{BCT15}) in the $p$-Poincar\'e-Sobolev inequality
$$
\inf_{c \in \mathbb{R}} ||f-c\,|\,L_p(\Omega)|| \leq B_{p,p}(\Omega) ||\nabla f\,|\,L_p(\Omega)||,  \quad f \in W^{1}_{p}(\Omega).  
$$ 

\vskip 0.2cm

{\sc 1.4 Methods.}
The suggested method is based on the composition operators theory in relation with Brennan's Conjecture that allows to obtain universal weighted Poincar\'e-Sobolev inequalities 
in any simple connected domain $\Omega \neq \mathbb R^2$ for quasiconformal weights $h(x):=|J(x,\varphi)|$
generated by quasiconformal homeomorphisms $\varphi : \Omega \to \mathbb D$ (Theorem~\ref{T2.2}). In quasiconformal regular domains these weighted inequalities imply non-weighted Poincar\'e-Sobolev inequalities.
This method based on the theory of composition operators \cite{U93,VU02}
and its applications to the Sobolev type embedding theorems \cite{GG94,GU09}. 

The following diagram illustrates this idea:

\[\begin{array}{rcl}
W^{1}_{p}(\Omega) & \stackrel{\varphi^*}{\longrightarrow} & W^{1}_{q}(\mathbb D) \\[2mm]
\multicolumn{1}{c}{\downarrow} & & \multicolumn{1}{c}{\downarrow} \\[1mm]
L_s(\Omega) & \stackrel{(\varphi^{-1})^*}{\longleftarrow} & L_r(\mathbb D).
\end{array}\]

Here the operator $\varphi^*$ defined by the composition rule
$\varphi^*(f)=f \circ \varphi$ is a bounded composition operator on Sobolev
spaces induced by a homeomorphism $\varphi$ of $\Omega$ and $\mathbb D$ and
the operator $(\varphi^{-1})^*$ defined by the composition rule
$(\varphi^{-1})^*(f)=f \circ \varphi^{-1}$ is a bounded composition operator on
Lebesgue spaces. This method allows to transfer Poincar\'e-Sobolev inequalities 
from regular domains (for example, from the unit disc $\mathbb D$) to $\Omega$.

\vskip 0.3cm
{\bf Theorem B.}
\textit{Let $\Omega\subset\mathbb R^2$ be a $K$-quasiconformal $\alpha$-regular domain and $\varphi : \Omega \to \mathbb D$ be a $K$-quasiconformal mapping. Suppose that the Brennan's Conjecture holds.  
Then for every
$$
p \in \left(\max \left\{\frac{4K}{2K+1},\frac{\alpha (2K-1)+2}{\alpha K}\right\} \,, 2\right)
$$ 
the $p$-Poincar\'e-Sobolev inequality 
$$
\inf_{c \in \mathbb{R}} ||f-c\,|\,L_p(\Omega)|| \leq B_{p,p}(\Omega) ||\nabla f\,|\,L_p(\Omega)||,  \quad f \in W^{1}_{p}(\Omega),  
$$
holds with the constant
$$
B^p_{p,p}(\Omega) \leq 
\inf_{q \in I}
\bigg\{
B^p_{\frac{\alpha p}{\alpha -2},q}(\mathbb D) 
\cdot K \big\||D\varphi^{-1}|^{p-2}\mid L_{\frac{q}{p-q}}(\mathbb D)\big\| \cdot \big\|J_{\varphi^{-1}}\mid L_{\frac{\alpha}{2}}(\mathbb D)\big\|
\bigg\}.
$$ }

\begin{remark}
In the Introduction we formulate main results under the assumptions that the Brennan\'s conjecture holds true
$4K/(2K+1)<\beta <4K/(2K-1)$. In the main part of the paper we proved main results for
$4K/(2K+1)<\beta <2K\beta_0/(\beta_0(K-1)+2)$ for $\beta_0$ that is a recent value for which  the Brennan\'s conjecture is proved.
\end{remark}

\vskip 0.3cm

The next main theorem establish a connection between Brennan's Conjecture and composition operators on  Sobolev spaces:
\vskip 0.3cm
\textbf{Theorem C.}
\textit{Let $\Omega\subset\mathbb R^2$ be a simply connected domain. Generalized Brennan's Conjecture holds for a number $\beta \in (4K/(2K+1),\,4K/(2K-1))$
if and only if any $K$-quasiconformal  homeomorphism $\varphi : \Omega \to \mathbb D$ 
induces a bounded composition operator
$$
\varphi^*: L_p^1(\mathbb D) \to L_q^1(\Omega)
$$ 
for any $p \in (2\,, +\infty)$ and $q=p\beta/(p+\beta -2)$}.

\vskip 0.3cm

In the recent works we study composition operators on Sobolev spaces defined on
planar domains in connection with the conformal mappings theory \cite{GU12}. This connection
leads to weighted Sobolev  embeddings \cite{GU13,GU14} with the universal conformal weights.
Another application of conformal composition operators was given in \cite{BGU} where the 
spectral stability problem for conformal regular domains was considered.

\section{Composition operators and quasiconformal mappings}

In this section we recall basic facts about composition operators on Lebesgue and Sobolev spaces and the quasiconformal mappings theory. Let $\Omega\subset\mathbb R^n$, $n\geq 2$, be a $n$-dimensional Euclidean domain.
For any $1\leq p<\infty$ we consider the Lebesgue space $L_p(\Omega)$ of measurable functions $f: \Omega \to \mathbb{R}$ equipped with the following norm:
\[
\|f\mid L_p(\Omega)\|=\biggr(\int\limits _{\Omega}|f(x)|^{p}\, dx\biggr)^{\frac{1}{p}}<\infty.
\]  

The following theorem about composition operators on Lebesgue spaces is well known (see, for example \cite{VU02}):
\begin{theorem}
Let $\varphi :\Omega \to \widetilde{\Omega}$ be a weakly differentiable homeomorphism between two domains $\Omega$ and $\widetilde{\Omega}$.
Then the composition operator
\[
\varphi^{*}: L_r(\widetilde{\Omega}) \to L_s(\Omega),\,\,\,1 \leq s \leq r< \infty,
\] 
is bounded, if and only if $\varphi^{-1}$ possesses the Luzin $N$-property and
\[
\biggr(\int\limits _{\widetilde{\Omega}}\big(|J(y,\varphi^{-1})|\big)^{\frac{r}{r-s}}\, dy\biggr)^{\frac{r-s}{rs}}=K< \infty,\,\,\,1 \leq s<r< \infty,
\]
\[
\esssup\limits_{y \in \widetilde{\Omega}}\big(|J(y,\varphi^{-1})|\big)^{\frac{1}{s}}=K< \infty,\,\,\,1 \leq s=r< \infty.
\] 
The norm of the composition operator $\|\varphi^{*}\|=K$.
\end{theorem}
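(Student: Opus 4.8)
The plan is to reduce both implications to the change of variables identity
$$
\int\limits_\Omega |f(\varphi(x))|^{s}\, dx = \int\limits_{\widetilde{\Omega}} |f(y)|^{s}\, |J(y,\varphi^{-1})|\, dy, \qquad f \in L_r(\widetilde{\Omega}),
$$
which becomes available as soon as $\varphi^{-1}$ is weakly differentiable and possesses the Luzin $N$-property: the area formula for the injective map $\varphi^{-1}$ then holds with equality and its multiplicity function collapses to the indicator of $\Omega$. Granting this identity, everything downstream is H\"older's inequality and its sharpness, so the real content is in justifying it at the stated level of generality.

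For sufficiency, assume $\varphi^{-1}$ has the Luzin $N$-property and the stated integrability. When $s<r$, apply H\"older's inequality to the right-hand side with the conjugate exponents $r/s$ and $r/(r-s)$; this gives
$$
\|\varphi^{*}f \mid L_s(\Omega)\| \leq \biggl(\,\int\limits_{\widetilde{\Omega}} |J(y,\varphi^{-1})|^{\frac{r}{r-s}}\, dy\biggr)^{\frac{r-s}{rs}} \|f \mid L_r(\widetilde{\Omega})\|,
$$
so $\varphi^{*}$ is bounded with $\|\varphi^{*}\| \leq K$. When $s=r$ one instead bounds $|J(y,\varphi^{-1})|$ by its essential supremum inside the integral, obtaining $\|\varphi^{*}\| \leq (\esssup_y |J(y,\varphi^{-1})|)^{1/s} = K$. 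The identity also shows $\varphi^{*}$ is well defined on equivalence classes, since $f = 0$ a.e.\ forces $f\circ\varphi = 0$ a.e.

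For necessity, suppose $\varphi^{*}$ is bounded. First I would recover the Luzin $N$-property of $\varphi^{-1}$: given a null set $E \subset \widetilde{\Omega}$, enclose it in an open set $U \supset E$ with $|U| < \varepsilon$, note that $\varphi^{*}(\chi_U) = \chi_{\varphi^{-1}(U)}$ pointwise with $\varphi^{-1}(U)$ open, and deduce $|\varphi^{-1}(E)|^{1/s} \leq |\varphi^{-1}(U)|^{1/s} \leq \|\varphi^{*}\|\, |U|^{1/r} < \|\varphi^{*}\|\, \varepsilon^{1/r}$; letting $\varepsilon \to 0$ gives $|\varphi^{-1}(E)| = 0$. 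With the $N$-property in hand the change of variables identity applies, and boundedness reads $\int_{\widetilde{\Omega}} |f|^{s} w\, dy \leq \|\varphi^{*}\|^{s} \|f \mid L_r(\widetilde{\Omega})\|^{s}$ with $w := |J(\cdot,\varphi^{-1})|$, for all $f \in L_r$. Substituting $g = |f|^{s}$ turns this into $\int_{\widetilde{\Omega}} g w\, dy \leq \|\varphi^{*}\|^{s} \|g \mid L_{r/s}(\widetilde{\Omega})\|$ for every nonnegative $g \in L_{r/s}$; testing against the H\"older near-extremizers $g \approx w^{s/(r-s)}$ (truncated to sets of finite measure to stay legitimate) forces $w \in L_{r/(r-s)}(\widetilde{\Omega})$ with $\|w \mid L_{r/(r-s)}(\widetilde{\Omega})\|^{1/s} \leq \|\varphi^{*}\|$, that is, $K \leq \|\varphi^{*}\|$. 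When $s = r$ the same inequality with $f = \chi_A$ for an arbitrary set $A$ of finite positive measure gives $|A|^{-1}\int_A w \leq \|\varphi^{*}\|^{s}$, hence $w \leq \|\varphi^{*}\|^{s}$ a.e.\ and again $K \leq \|\varphi^{*}\|$. Combined with the sufficiency bound this yields $\|\varphi^{*}\| = K$.

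The step I expect to be the main obstacle is the first one: verifying that for a weakly differentiable homeomorphism whose inverse enjoys the Luzin $N$-property the change of variables formula holds with equality, and that $|J(y,\varphi^{-1})|$ is a genuine, a.e.\ defined, locally integrable Jacobian. This is exactly where one must invoke the structure theory of Sobolev homeomorphisms and the area formula from the cited literature; everything after that is a routine use of H\"older's inequality together with its sharpness via duality in $L_{r/s}(\widetilde{\Omega})$.
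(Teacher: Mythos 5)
The paper itself offers no proof of this statement (it is quoted as ``well known'' from the Vodop'yanov--Ukhlov literature), so your proposal can only be measured against the standard argument --- and in outline it coincides with it: reduce to the pushforward of Lebesgue measure under $\varphi^{-1}$, get sufficiency from H\"older with exponents $r/s$ and $r/(r-s)$ (or an $\esssup$ bound when $s=r$), get the Luzin $N$-property from indicators of small open neighbourhoods of a null set, and get the sharp norm bound $K\le\|\varphi^*\|$ by testing the inequality $\int_{\widetilde\Omega} g\,w\,dy\le\|\varphi^*\|^s\|g\mid L_{r/s}(\widetilde\Omega)\|$ against truncated near-extremizers $g\approx w^{s/(r-s)}$. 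All of these steps are carried out correctly and the exponent bookkeeping matches the statement.

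The one genuine soft spot is exactly the step you flag yourself: you justify the identity
$\int_\Omega|f(\varphi(x))|^s\,dx=\int_{\widetilde\Omega}|f(y)|^s\,|J(y,\varphi^{-1})|\,dy$
by assuming $\varphi^{-1}$ is weakly differentiable and invoking the area formula. That hypothesis is not available: the theorem assumes only that $\varphi$ is weakly differentiable, and the inverse of a Sobolev homeomorphism need not be Sobolev, so as written this step would not go through. It is also unnecessary. The correct (and more elementary) route is purely measure-theoretic: set $\mu(A):=|\varphi^{-1}(A)|$, a Borel measure on $\widetilde\Omega$; then $\int_\Omega|f\circ\varphi|^s\,dx=\int_{\widetilde\Omega}|f|^s\,d\mu$ holds for every homeomorphism with no differentiability at all, the Luzin $N$-property of $\varphi^{-1}$ is precisely the statement $\mu\ll dy$, and by the Lebesgue/Radon--Nikodym differentiation theorem the density $d\mu/dy$ is the volume derivative $\lim_{\rho\to0}|\varphi^{-1}(B(y,\rho))|/|B(y,\rho)|$ --- which is what $|J(y,\varphi^{-1})|$ means in this theorem (the paper itself uses this identification for quasiconformal mappings, and it agrees with the determinant of the differential wherever $\varphi^{-1}$ happens to be differentiable). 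With that reading, your H\"older and duality arguments are exactly the standard proof; note also that the weak differentiability of $\varphi$ plays no role in this Lebesgue-space statement, so nothing is lost by not using it.
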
 

We consider the Sobolev space $W^1_p(\Omega)$, $1\leq p<\infty$,
as a Banach space of locally integrable weakly differentiable functions
$f:\Omega\to\mathbb{R}$ equipped with the following norm: 
\[
\|f\mid W^1_p(\Omega)\|=\biggr(\int\limits _{\Omega}|f(x)|^{p}\, dx\biggr)^{\frac{1}{p}}+
\biggr(\int\limits _{\Omega}|\nabla f(x)|^{p}\, dx\biggr)^{\frac{1}{p}}.
\]
Recall that the Sobolev space $W^1_p(\Omega)$ coincides with the closer of the space of smooth functions $C^{\infty}(\Omega)$ in the norm of $W^1_p(\Omega)$.

We consider also the homogeneous seminormed Sobolev space $L^1_p(\Omega)$, $1\leq p<\infty$,
of locally integrable weakly differentiable functions $f:\Omega\to\mathbb{R}$ equipped
with the following seminorm: 
\[
\|f\mid L^1_p(\Omega)\|=\biggr(\int\limits _{\Omega}|\nabla f(x)|^{p}\, dx\biggr)^{\frac{1}{p}}.
\]

Recall that the embedding operator $i:L^1_p(\Omega)\to L_{1,\loc}(\Omega)$
is bounded.

By the standard definition functions of $L^1_p(\Omega)$ are defined only up to a set of measure zero, but they can be redefined quasieverywhere i.~e. up to a set of $p$-capacity zero. Indeed, every function $u\in L^1_p(\Omega)$ has a unique quasicontinuous representation $\tilde{u}\in L^1_p(\Omega)$. A function $\tilde{u}$ is termed quasicontinuous if for any $\varepsilon >0$ there is an open  set $U_{\varepsilon}$ such that the $p$-capacity of $U_{\varepsilon}$ is less then $\varepsilon$ and on the set $\Omega\setminus U_{\varepsilon}$ the function  $\tilde{u}$ is continuous (see, for example \cite{HKM,M}).

Let $\Omega$ and $\widetilde{\Omega}$ be domains in $\mathbb R^n$. We say that
a homeomorphism $\varphi:\Omega\to\widetilde{\Omega}$ induces a bounded composition
operator 
\[
\varphi^{\ast}:L^1_p(\widetilde{\Omega})\to L^1_q(\Omega),\,\,\,1\leq q\leq p\leq\infty,
\]
by the composition rule $\varphi^{\ast}(f)=f\circ\varphi$, if the composition $\varphi^{\ast}(f)\in L^1_q(\Omega)$
is defined quasi-everywhere in $\Omega$ and there exists a constant $K_{p,q}(\Omega)<\infty$ such that 
\[
\|\varphi^{\ast}(f)\mid L^1_q(\Omega)\|\leq K_{p,q}(\Omega)\|f\mid L^1_p(\widetilde{\Omega})\|
\]
for
any function $f\in L^1_p(\widetilde{\Omega})$ \cite{VU04}.

Let $\Omega\subset\mathbb R^n$ be an open set. A mapping $\varphi:\Omega\to\mathbb R^n$ belongs to $L^1_{p,\loc}(\Omega)$, 
$1\leq p\leq\infty$, if its coordinate functions $\varphi_j$ belong to $L^1_{p,\loc}(\Omega)$, $j=1,\dots,n$.
In this case the formal Jacobi matrix
$D\varphi(x)=\left(\frac{\partial \varphi_i}{\partial x_j}(x)\right)$, $i,j=1,\dots,n$,
and its determinant (Jacobian) $J(x,\varphi)=\det D\varphi(x)$ are well defined at
almost all points $x\in \Omega$. The norm $|D\varphi(x)|$ of the matrix
$D\varphi(x)$ is the norm of the corresponding linear operator $D\varphi (x):\mathbb R^n \rightarrow \mathbb R^n$ defined by the matrix $D\varphi(x)$.

Let $\varphi:\Omega\to\widetilde{\Omega}$ be weakly differentiable in $\Omega$. The mapping $\varphi$ is the mapping of finite distortion if $|D\varphi(z)|=0$ for almost all $x\in Z=\{z\in\Omega : J(x,\varphi)=0\}$.

A mapping $\varphi:\Omega\to\mathbb R^n$ possesses the Luzin $N$-property if a image of any set of measure zero has measure zero.
Mote that any Lipschitz mapping possesses the Luzin $N$-property.

The following theorem gives the analytic description of composition operators on Sobolev spaces:

\begin{theorem}
\label{CompTh} \cite{U93,VU02} A homeomorphism $\varphi:\Omega\to\widetilde{\Omega}$
between two domains $\Omega$ and $\widetilde{\Omega}$ induces a bounded composition
operator 
\[
\varphi^{\ast}:L^1_p(\widetilde{\Omega})\to L^1_q(\Omega),\,\,\,1\leq q< p<\infty,
\]
 if and only if $\varphi\in W_{1,\loc}^{1}(\Omega)$, has finite distortion,
and 
$$
K_{p,q}(\Omega)=\left(\int\limits_\Omega \left(\frac{|D\varphi(x)|^p}{|J(x,\varphi)|}\right)^\frac{q}{p-q}~dx\right)^\frac{p-q}{pq}<\infty.
$$
\end{theorem}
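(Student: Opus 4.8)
The final statement to prove is Theorem~\ref{CompTh}, the analytic characterization of bounded composition operators $\varphi^{\ast}:L^1_p(\widetilde{\Omega})\to L^1_q(\Omega)$ for $1\leq q<p<\infty$. The plan is to prove the two implications separately, with the harder direction being sufficiency (from the finiteness of the distortion integral $K_{p,q}(\Omega)$ to boundedness of the operator).

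\textbf{Necessity.} Assume $\varphi^{\ast}$ is bounded. First I would show $\varphi\in W^1_{1,\loc}(\Omega)$ and has finite distortion by testing the operator on coordinate functions (or smooth functions that agree locally with linear coordinate projections), using that these lie in $L^1_p(\widetilde{\Omega})$ on relatively compact subsets and that the chain rule $\nabla(f\circ\varphi)(x)=D\varphi(x)^{t}\,\nabla f(\varphi(x))$ holds almost everywhere for bounded composition operators. The finite-distortion property follows because on the zero set $Z=\{J(x,\varphi)=0\}$ the composed gradients must vanish in order for the $L_q$ seminorm estimate to be compatible with a change of variables that is degenerate there. To extract the integral bound on $K_{p,q}(\Omega)$, I would use the pointwise operator norm $|D\varphi(x)|$ as the key quantity: choosing test functions whose gradients at the image point $\varphi(x)$ are aligned with the direction realizing $|D\varphi(x)|$, applying the boundedness inequality, and then passing to the supremum over admissible test functions gives a pointwise lower bound that, integrated via the change of variables formula (legitimate since $\varphi^{-1}$ has the Luzin $N$-property in this setting), reproduces the $\frac{q}{p-q}$-integral. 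The duality relation $\frac{1}{q}=\frac{1}{p}+\frac{1}{r}$ with $r=\frac{pq}{p-q}$ organizes the H\"older exponents here.

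\textbf{Sufficiency.} Assume $\varphi\in W^1_{1,\loc}(\Omega)$ has finite distortion and $K_{p,q}(\Omega)<\infty$. Take $f\in L^1_p(\widetilde{\Omega})$; I first reduce to smooth $f$ by density, then justify the chain rule $\nabla(f\circ\varphi)=D\varphi^{t}(\nabla f\circ\varphi)$ almost everywhere. The central estimate is a pointwise bound $|\nabla(f\circ\varphi)(x)|\leq |D\varphi(x)|\,|\nabla f(\varphi(x))|$, from which
\[
\int\limits_{\Omega}|\nabla(f\circ\varphi)(x)|^q\,dx\leq \int\limits_{\Omega}|D\varphi(x)|^q\,|\nabla f(\varphi(x))|^q\,dx.
\]
Now I would apply H\"older's inequality with exponents $\frac{p}{q}$ and $\frac{p}{p-q}$ to split off the factor $|\nabla f(\varphi(x))|^q$, writing the integrand as
\[
|D\varphi(x)|^q=\left(\frac{|D\varphi(x)|^p}{|J(x,\varphi)|}\right)^{\frac{q}{p}}\cdot |J(x,\varphi)|^{\frac{q}{p}},
\]
so that the distortion factor is paired to the exponent $\frac{q}{p-q}$ (giving $K_{p,q}(\Omega)$) and the Jacobian factor combines with $|\nabla f(\varphi(x))|^q$ to yield, after the change of variables $y=\varphi(x)$ with $dy=|J(x,\varphi)|\,dx$, exactly $\|f\mid L^1_p(\widetilde{\Omega})\|^q$. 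This produces $\|\varphi^{\ast}(f)\mid L^1_q(\Omega)\|\leq K_{p,q}(\Omega)\,\|f\mid L^1_p(\widetilde{\Omega})\|$.

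\textbf{Main obstacle.} The delicate point is the rigorous justification of the change of variables and the chain rule when $\varphi$ is merely a finite-distortion Sobolev homeomorphism rather than a diffeomorphism: one must ensure that the composition $f\circ\varphi$ is weakly differentiable with the expected gradient, that $\varphi$ maps the set where $\nabla f$ is undefined (a set of $p$-capacity zero, handled via the quasicontinuous representative) to a $q$-capacity-negligible set, and that the change of variables formula applies despite the possible failure of the Luzin $N$-property on the zero set $Z$. The finite-distortion hypothesis together with $K_{p,q}(\Omega)<\infty$ is precisely what guarantees these capacitary and measure-theoretic compatibilities; verifying them carefully, rather than the H\"older bookkeeping, is where the real work lies, and I would lean on the composition-operator machinery of~\cite{U93,VU02} to supply the requisite capacity and approximation lemmas.
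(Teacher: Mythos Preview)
The paper does not prove Theorem~\ref{CompTh}; it is stated as a known result with citations to \cite{U93,VU02} and is used as a black box throughout. So there is no ``paper's own proof'' to compare your proposal against.

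That said, your sketch for the \emph{sufficiency} direction is essentially the argument one finds in those references: the pointwise factorization $|D\varphi(x)|^q=\bigl(|D\varphi(x)|^p/|J(x,\varphi)|\bigr)^{q/p}|J(x,\varphi)|^{q/p}$ followed by H\"older with exponents $p/(p-q)$ and $p/q$, together with the change-of-variables formula, is exactly the computation that yields the bound $\|\varphi^{\ast}(f)\mid L^1_q(\Omega)\|\leq K_{p,q}(\Omega)\|f\mid L^1_p(\widetilde{\Omega})\|$. Your identification of the main obstacle (chain rule and change of variables for finite-distortion Sobolev homeomorphisms, capacity compatibility) is also accurate.

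Your sketch for \emph{necessity}, however, is too loose to work as written. Testing the operator on individual smooth functions aligned with the maximal-stretch direction gives only pointwise information; it does not by itself produce the global $L_{q/(p-q)}$ integrability of the inner distortion. The standard route in \cite{U93,VU02} is different: one defines a locally finite countably additive set function $\Phi$ on open subsets of $\widetilde{\Omega}$ by $\Phi(U)=\|\varphi^{\ast}\|^{pq/(p-q)}_{L^1_p(U)\to L^1_q(\varphi^{-1}(U))}$, shows (via a covering/additivity argument) that $\Phi$ is absolutely continuous with respect to Lebesgue measure, and then identifies its density with $\bigl(|D\varphi|^p/|J_\varphi|\bigr)^{q/(p-q)}$ using a Lebesgue differentiation argument on small cubes or balls where $\varphi$ is approximately linear. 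Your proposal skips this set-function machinery, and without it the passage from operator boundedness to the integral condition has a genuine gap.
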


Recall that a homeomorphism $\varphi: \Omega\to \widetilde{\Omega}$ is called a $K$-quasiconformal mapping if $\varphi\in W^1_{n,\loc}(\Omega)$ and there exists a constant $1\leq K<\infty$ such that
$$
|D\varphi(x)|^n\leq K |J(x,\varphi)|\,\,\text{for almost all}\,\,x\in\Omega.
$$

Quasiconformal mappings have a finite distortion, i.~e.  $D\varphi(x)=0$ for almost all points $x$
that belongs to set $Z=\{x\in \Omega:J(x,\varphi)=0\}$ and any quasiconformal mapping possesses Luzin $N$-property.  A mapping which is inverse to a quasiconformal mapping is also quasiconformal.

If $\varphi : \Omega \to \widetilde{\Omega}$ is a $K$-quasiconformal mapping then $\varphi$ is differentiable almost everywhere in $\Omega$ and
$$
|J(x,\varphi)|=J_{\varphi}(x):=\lim\limits_{r\to 0}\frac{|\varphi(B(x,r))|}{|B(x,r)|}\,\,\text{for almost all}\,\,x\in\Omega.
$$   

Note, that a homeomorphism $\varphi: \Omega\to \widetilde{\Omega}$ is a $K$-quasiconformal mapping if and only if $\varphi$ generates by the composition rule $\varphi^{\ast}(f)=f\circ\varphi$ an isomorphism of Sobolev spaces $L^1_n(\Omega)$ and $L^1_n(\widetilde{\Omega})$:
$$
K^{-\frac{1}{n}}\|f \mid L^1_n(\widetilde{\Omega})\|\leq \|\varphi^{\ast}f \mid L^1_n(\Omega)\|\leq K^{\frac{1}{n}}\|f \mid L^1_n(\widetilde{\Omega})\|
$$
for any $f\in L^1_n(\widetilde{\Omega})$ \cite{VG75}.

Compositions of Sobolev functions of the spaces $L^1_p(\Omega')$, $p\ne n$, with quasiconformal mappings was studied also in \cite{KR}.

For any planar $K$-quasiconformal homeomorphism $\varphi : \Omega \to \widetilde{\Omega}$,
the following sharp results is known: $J(x,\varphi) \in L_{\alpha^*, \loc}(\widetilde{\Omega})$ 
for any $\alpha^*<K/(K-1)$ (\cite{Ast}).

If $K\equiv 1$ then $1$-quasiconformal homeomorphisms are conformal mappings and in the space $\mathbb R^n$, $n\geq 3$, are exhausted by M\"obius transformations.  

\section{Composition operators and Brennan's Conjecture}

Brennan's Conjecture \cite{Br} is that if $\varphi:\Omega\to\mathbb D$ is a conformal mappings of a simply connected planar domain $\Omega$, $\Omega\ne \mathbb R^2$, onto the unit disc $\mathbb D$ then
\begin{equation}
\int\limits _{\Omega}|\varphi'(x)|^{\beta}~dx<+\infty,\quad\text{for all}\quad\frac{4}{3}<\beta<4.\label{eq:BR1.1}
\end{equation}

For $4/3<s<3$, it is a comparatively easy consequence of the Koebe distortion theorem (see, for example, \cite{Ber}).
J.~Brennan \cite{Br} (1973) extended this range to $4/3<s<3+\delta$, where $\delta>0$, and conjectured it to hold 
for $4/3<s<4$. The example of
$\Omega=\mathbb{C}\setminus(-\infty,-1/4]$ shows that this range
of $s$ cannot be extended.

Brennan's Conjecture proved for $\beta\in (4/3,\beta_0)$, were $\beta_0=3.752$ \cite{HShim}. Brennan's Conjecture for quasiconformal mappings was considered in \cite{HSS}. In \cite{HSS} was proved, that if $\varphi:\Omega\to\mathbb D$ be a $K$-quasiconformal mapping, then
\begin{equation}
\int\limits _{\Omega}|D\varphi(x)|^{\beta}~dx<+\infty,\quad\text{for all}\quad\frac{4K}{2K+1}<\beta<\frac{2K\beta_0}{(K-1)\beta_0+2}.\label{eq:HSS1.1}
\end{equation}
Here $\beta_0$ is the proved upper bound for Brennan's Conjecture.

Now we prove, that Generalized Brennan's Conjecture leads to boundedness of composition operators on Sobolev spaces generates by quasiconformal mappings.

\vskip 0.3cm

\textbf{Theorem C.}
\textit{Let $\Omega\subset\mathbb R^2$ be a simply connected domain. Generalized Brennan's Conjecture holds for a number $\beta \in (4K/(2K+1),\,2K\beta_0/(\beta_0(K-1)+2))$
if and only if any $K$-quasiconformal  homeomorphism $\varphi : \Omega \to \mathbb D$ 
induces a bounded composition operator
$$
\varphi^*: L_p^1(\mathbb D) \to L_q^1(\Omega)
$$ 
for any $p \in (2\,, +\infty)$ and $q=p\beta/(p+\beta -2)$}.

\begin{proof}
By the composition theorem \cite{U93,VU02} a homeomorphism $\varphi: \Omega\to\mathbb D$
induces a bounded composition operator
$$
\varphi^*: L_p^1(\mathbb D) \to L_q^1(\Omega), \quad 1\leq q<p<\infty.
$$ 
if and only if $\varphi\in W^1_{1,\loc}(\Omega)$, has finite distortion and 
$$
K_{p,q}(\Omega)=\left(\int\limits_\Omega \left(\frac{|D\varphi(x)|^p}{|J(x,\varphi)|}\right)^\frac{q}{p-q}~dx\right)^\frac{p-q}{pq}<\infty.
$$
Because $\varphi$ is a quasiconformal mapping, then $\varphi\in W^1_{n,\loc}(\Omega)$ and Jacobian $J(x,\varphi)\ne 0$ for almost all $x\in\Omega$. Hence the $p$-dilatation
$$
K_p(x)=\frac{|D\varphi(x)|^p}{|J(x,\varphi)|}
$$
is well defined for almost all $x\in\Omega$ and so $\varphi$ is a mapping of finite distortion.

By Brennan's Conjecture
$$
\int\limits _{\Omega}|D\varphi(x)|^{\beta}~dx <+\infty,\quad\text{for all}\quad\frac{4K}{2K+1}<\beta<\frac{2K\beta_0}{(K-1)\beta_0+2}.
$$
Then
\begin{multline*}
K_{p,q}^{\frac{pq}{p-q}}(\Omega)=\int\limits_\Omega \left(\frac{|D\varphi(x)|^p}{|J(x,\varphi)|}\right)^\frac{q}{p-q}~dx=
\int\limits_\Omega \left(\frac{|D\varphi(x)|^2}{|J(x,\varphi)|}|D\varphi(x)|^{p-2}\right)^\frac{q}{p-q}~dx\\
\leq 
K^{\frac{q}{p-q}}\int\limits_\Omega \left(|D\varphi(x)|^{p-2}\right)^\frac{q}{p-q}~dx =
K^{\frac{q}{p-q}}\int\limits_\Omega |D\varphi(x)|^{\beta}~dx<\infty,
\end{multline*}
for $\beta=(p-2)q/(p-q)$. Hence we have a bounded composition operator
$$
\varphi^*: L_p^1(\mathbb D) \to L_q^1(\Omega)
$$ 
for any $p \in (2\,, +\infty)$ and $q=p\beta/(p+\beta -2)$.

Let us check that $q<p$. Because $p>2$ we have that $p+\beta -2> \beta >0$ and so $\beta /(p+\beta -2) <1$. Hence we obtain  $q<p$. 

Now, let the composition operator
$$
\varphi^*: L_p^1(\mathbb D) \to L_q^1(\Omega),\,\,q<p,
$$
is bounded for any $p \in (2\,, +\infty)$ and $q=p\beta/(p+\beta -2)$.
Then, using the  Hadamard inequality: 
$$
|J(x,\varphi)|\leq |D\varphi(x)|^2\,\,\text{for almost all}\,\, x\in\Omega,
$$ 
and Theorem~\ref{CompTh}, we have
$$
\int\limits _{\Omega}|D\varphi(x)|^{\beta}~dx=\int\limits _{\Omega}|D\varphi(x)|^{\frac{(p-2)q}{p-q}}~dx\leq 
\int\limits_\Omega \left(\frac{|D\varphi(x)|^{p}}{|J(x,\varphi)|}\right)^\frac{q}{p-q}~dx<+\infty. 
$$

\end{proof}

The suggested approach to the Poincar\'e-Sobolev type inequalities in bounded planar domains $\Omega\subset\mathbb R^2$ is based on translation of these inequalities from the unit disc $\mathbb D$ to $\Omega$. On this way we use the following duality \cite{VU98}: 

\begin{theorem}
\label{theorem:duality}
Let a homeomorphism $\varphi: \Omega\to\Omega'$, $\Omega,\Omega'\subset\mathbb R^2$, generates by the composition rule 
$\varphi^{\ast}(f)=f\circ\varphi$ a bounded composition operator  
$$
\varphi^{\ast}: L^1_p(\Omega')\to L^1_q(\Omega), \,\,\,1< q\leq p<\infty,
$$
then the inverse mapping $\varphi^{-1}:\Omega'\to\Omega$ generates by the composition rule 
$(\varphi^{-1})^{\ast}(g)=g\circ\varphi^{-1}$ a bounded composition operator
$$
(\varphi^{-1})^{\ast}: L^1_{q'}(\Omega)\to L^1_{p'}(\Omega'), \,\,\,\frac{1}{q}+\frac{1}{q'}=1, \frac{1}{p}+\frac{1}{p'}=1.
$$
\end{theorem}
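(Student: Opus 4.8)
The plan is to reduce everything to the analytic description of composition operators on homogeneous Sobolev spaces (Theorem~\ref{CompTh}) together with a pointwise linear-algebra identity that is special to the plane. First I would apply Theorem~\ref{CompTh} to the hypothesis: boundedness of $\varphi^{\ast}:L^1_p(\Omega')\to L^1_q(\Omega)$ (in the case $q<p$) yields that $\varphi\in W^1_{1,\loc}(\Omega)$, that $\varphi$ has finite distortion, and that
\[
K_{p,q}(\Omega)=\left(\int\limits_\Omega \left(\frac{|D\varphi(x)|^p}{|J(x,\varphi)|}\right)^{\frac{q}{p-q}}dx\right)^{\frac{p-q}{pq}}<\infty .
\]
The borderline case $q=p$ is treated the same way via the $\esssup$-version of the composition theorem, so I would concentrate on $q<p$; note that then $p'<q'$, which is exactly the range in which Theorem~\ref{CompTh} can be read off for the inverse map $\varphi^{-1}$ (source exponent $q'$, target exponent $p'$).

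The second step — and the one I expect to be the main obstacle — is to show that $\varphi^{-1}\in W^1_{1,\loc}(\Omega')$ and that $\varphi^{-1}$ is a mapping of finite distortion. This is precisely where the hypothesis $q>1$ (i.e.\ $q>n-1$ in dimension $n=2$) is indispensable: it forces enough regularity on $\varphi$ that the inverse homeomorphism is weakly differentiable with locally integrable derivative, and it controls the null set on which $\varphi$ degenerates. I would invoke the inverse-mapping theory for Sobolev homeomorphisms of finite distortion underlying \cite{VU98}: at almost every $y=\varphi(x)$ with $J(x,\varphi)\neq 0$ the map $\varphi^{-1}$ is differentiable at $y$, with $D\varphi^{-1}(y)=(D\varphi(x))^{-1}$ and $J(y,\varphi^{-1})=1/J(x,\varphi)$, and the exceptional set has measure zero and does not obstruct the change of variables used below.

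The third step is a pointwise identity valid for an invertible $2\times2$ matrix $A$: if $\sigma_1\geq\sigma_2>0$ are its singular values then $|A|=\sigma_1$, $|A^{-1}|=\sigma_2^{-1}$ and $|\det A|=\sigma_1\sigma_2$, whence $|A^{-1}|=|A|/|\det A|$. Applied with $A=D\varphi(x)$ this gives, for a.e.\ $x$,
\[
|D\varphi^{-1}(\varphi(x))|=\frac{|D\varphi(x)|}{|J(x,\varphi)|},\qquad J(\varphi(x),\varphi^{-1})=\frac{1}{J(x,\varphi)} .
\]
Then I would compute the operator distortion of $\varphi^{-1}$ by the change of variables $y=\varphi(x)$ (legitimate by the regularity and Luzin $N$-type properties from Step~2):
\[
\int\limits_{\Omega'}\left(\frac{|D\varphi^{-1}(y)|^{q'}}{|J(y,\varphi^{-1})|}\right)^{\frac{p'}{q'-p'}}dy
=\int\limits_{\Omega}\left(\frac{|D\varphi(x)|^{q'}}{|J(x,\varphi)|^{q'-1}}\right)^{\frac{p'}{q'-p'}}|J(x,\varphi)|\,dx .
\]
Using $1/q+1/q'=1$ and $1/p+1/p'=1$ one checks the elementary identities $q'\cdot\frac{p'}{q'-p'}=\frac{pq}{p-q}$ and $(q'-1)\frac{p'}{q'-p'}-1=\frac{q}{p-q}$, so the right-hand integral equals
\[
\int\limits_\Omega\left(\frac{|D\varphi(x)|^p}{|J(x,\varphi)|}\right)^{\frac{q}{p-q}}dx=K_{p,q}(\Omega)^{\frac{pq}{p-q}}<\infty .
\]

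Consequently $K_{q',p'}(\Omega')=K_{p,q}(\Omega)<\infty$, and Theorem~\ref{CompTh} applied to $\varphi^{-1}$ (with exponents $p'<q'$) produces the bounded composition operator $(\varphi^{-1})^{\ast}:L^1_{q'}(\Omega)\to L^1_{p'}(\Omega')$, with operator norm equal to that of $\varphi^{\ast}$; this completes the proof. The only genuinely nontrivial ingredient is Step~2 — the $W^1_{1,\loc}$-regularity and finite distortion of the inverse homeomorphism — which is where the restriction $q>1$ enters; everything else is bookkeeping with conjugate exponents and the $2\times2$ matrix identity.
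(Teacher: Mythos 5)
The paper itself gives no proof of Theorem~\ref{theorem:duality}: it is imported verbatim from \cite{VU98}, so there is no internal argument to compare against. Judged on its own terms, your reconstruction follows the standard route and the computational core is correct: the planar singular-value identity $|A^{-1}|=|A|/|\det A|$, the change of variables $y=\varphi(x)$, and the exponent bookkeeping $q'\cdot\frac{p'}{q'-p'}=\frac{pq}{p-q}$, $(q'-1)\frac{p'}{q'-p'}-1=\frac{q}{p-q}$ do give $K_{q',p'}(\Omega')=K_{p,q}(\Omega)$, and the limiting case $q=p$ indeed reduces to essential boundedness of $\bigl(|D\varphi|^p/|J(\cdot,\varphi)|\bigr)^{1/(p-1)}$. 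The one genuinely hard ingredient is exactly the one you flag but do not prove: that $\varphi^{-1}\in W^1_{1,\loc}(\Omega')$, has finite distortion, and satisfies $D\varphi^{-1}(y)=(D\varphi(\varphi^{-1}(y)))^{-1}$, $J(y,\varphi^{-1})=1/J(\varphi^{-1}(y),\varphi)$ at a.e.\ point where $J(y,\varphi^{-1})\neq 0$. Quoting this from the inverse-mapping theory (in the plane it is the Hencl--Koskela type regularity of inverses of finite-distortion Sobolev homeomorphisms; in \cite{VU98} it is handled by other means) is legitimate, but it is the substance of the theorem rather than a technicality, so your proof is an outline resting on that citation.

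Two smaller points deserve correction. First, your claim that $q>1$ is what makes Step~2 work is not accurate in the plane: there, $W^1_{1,\loc}$ plus finite distortion already suffices for the Sobolev regularity and finite distortion of the inverse; the hypothesis $q>1$ is needed so that $q'<\infty$ and the target statement makes sense (and it mirrors the condition $q>n-1$ required by the general $n$-dimensional duality, where the conjugate exponents are replaced by $q/(q-n+1)$ and $p/(p-n+1)$). Second, the change of variables should not be asserted as an unqualified equality: the clean argument splits $\Omega'$ into the set where $J(\cdot,\varphi^{-1})=0$ (where finite distortion makes the integrand vanish) and the differentiability set with nonzero Jacobian, on which the area formula for the injective Sobolev map $\varphi^{-1}$ gives the inequality
$$
\int\limits_{\Omega'}\left(\frac{|D\varphi^{-1}(y)|^{q'}}{|J(y,\varphi^{-1})|}\right)^{\frac{p'}{q'-p'}}dy
\leq \int\limits_{\Omega}\left(\frac{|D\varphi(x)|^{p}}{|J(x,\varphi)|}\right)^{\frac{q}{p-q}}dx,
$$
which is all that Theorem~\ref{CompTh} requires; this sidesteps any Luzin $N$-property discussion. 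With these adjustments your argument is a correct and self-contained (modulo the cited inverse-regularity results) proof of the stated duality.
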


From Theorem~C and Theorem~\ref{theorem:duality} immediately follow Theorem~\ref{T2.1}:

\begin{theorem}\label{T2.1}
Let $\Omega\subset\mathbb R^2$ be a simply connected domain and $\varphi : \Omega \to \mathbb D$ be a $K$-quasiconformal  homeomorphism. 
Suppose that $p \in (2K\beta_0/((K+1)\beta_0-2)\,, 2)$.

Then the inverse mapping $\varphi^{-1}$ induces a bounded composition operator
$$
(\varphi^{-1})^*: L_p^1(\Omega) \to L_q^1(\mathbb D)
$$ 
for any $q$ such that
$$
1\leq q \leq \frac{(2\beta_0-4)p}{2K\beta_0-((K-1)\beta_0+2)p} < \frac{2p}{4K-(2K-1)p}.
$$
The inequality
\begin{equation}\label{Eq_1}
||(\varphi^{-1})^*f\,|\,L^{1}_{q}(\mathbb D)|| \leq 
K^{\frac{1}{p}}\left(\int\limits_\mathbb D |D\varphi^{-1}(y)|^{\frac{(p-2)q}{p-q}}dy \right)^{\frac{p-q}{pq}}
||f\,|\,L^{1}_{p}(\Omega)||
\end{equation}
holds for any function $f \in L^{1}_{p}(\Omega)$.
\end{theorem}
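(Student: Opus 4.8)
The plan is to read Theorem~\ref{T2.1} off from Theorem~C and the duality Theorem~\ref{theorem:duality}, and then to extract the explicit constant in~\eqref{Eq_1} from the analytic description of composition operators in Theorem~\ref{CompTh}. First I would recast the hypothesis on $p$. Writing $B:=2K\beta_0/((K-1)\beta_0+2)$ for the upper Brennan exponent (so $B>2$, since $\beta_0>2$), an elementary computation gives $B/(B-1)=2K\beta_0/((K+1)\beta_0-2)$; hence $p\in\big(2K\beta_0/((K+1)\beta_0-2),2\big)$ is equivalent to $p'\in(2,B)$, where $p'=p/(p-1)$. For a parameter $q\in[1,\,\cdot\,)$ to be fixed below put $q'=q/(q-1)$ and
\[
\beta:=\frac{p\,(2-q)}{p-q}=\frac{p'(q'-2)}{q'-p'},
\]
so that $p'=q'\beta/(q'+\beta-2)$; that is, $\beta$ and the pair $(q',p')$ satisfy exactly the relation occurring in Theorem~C.

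Next I would verify that $\beta$ lies in the Brennan range $\big(4K/(2K+1),\,B\big)$ precisely when $q$ lies in the asserted interval. From $q<p<2$ one gets $\beta=p(2-q)/(p-q)>2$, hence $\beta>4K/(2K+1)$ automatically; clearing the positive denominators $p-q$ and $B-p$, the inequality $\beta<B$ becomes $q<(B-2)p/(B-p)$, which on substituting the value of $B$ is exactly $q<(2\beta_0-4)p/\big(2K\beta_0-((K-1)\beta_0+2)p\big)$, and $(B-2)p/(B-p)>1$ is equivalent to $p>B/(B-1)$, i.e.\ to the hypothesis on $p$, so this interval is non-empty. For such $q$, Theorem~C (taking its source exponent to be $q'\in(2,\infty)$ and its target exponent to be $q'\beta/(q'+\beta-2)=p'$) provides a bounded operator $\varphi^{*}:L_{q'}^1(\mathbb D)\to L_{p'}^1(\Omega)$ with $p'\le q'$, and Theorem~\ref{theorem:duality} applied to $\varphi$ then yields the bounded operator $(\varphi^{-1})^{*}:L_p^1(\Omega)\to L_q^1(\mathbb D)$, since $(p')'=p$ and $(q')'=q$. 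The endpoint $q=1$ (where $q'=\infty$) is obtained directly from Theorem~\ref{CompTh} applied to $\varphi^{-1}$, the integrability needed there being the case $\beta=p'$ of Brennan's conjecture.

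For the quantitative estimate I would apply the norm part of Theorem~\ref{CompTh} to the $K$-quasiconformal homeomorphism $\varphi^{-1}:\mathbb D\to\Omega$ (which lies in $W_{2,\loc}^1(\mathbb D)$ and has finite distortion): the operator norm of $(\varphi^{-1})^{*}:L_p^1(\Omega)\to L_q^1(\mathbb D)$ equals
\[
K_{p,q}(\mathbb D)=\left(\int\limits_{\mathbb D}\left(\frac{|D\varphi^{-1}(y)|^{p}}{|J(y,\varphi^{-1})|}\right)^{\frac{q}{p-q}}dy\right)^{\frac{p-q}{pq}}.
\]
Factoring $|D\varphi^{-1}|^{p}/|J(\cdot,\varphi^{-1})|=\big(|D\varphi^{-1}|^{2}/|J(\cdot,\varphi^{-1})|\big)\,|D\varphi^{-1}|^{p-2}$ and using the $K$-quasiconformal inequality $|D\varphi^{-1}|^{2}\le K|J(\cdot,\varphi^{-1})|$ gives $K_{p,q}(\mathbb D)\le K^{1/p}\big(\int_{\mathbb D}|D\varphi^{-1}(y)|^{(p-2)q/(p-q)}\,dy\big)^{(p-q)/(pq)}$, which is precisely the constant in~\eqref{Eq_1}. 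Finiteness of this last integral follows from the boundedness just established; alternatively, the change of variables $x=\varphi^{-1}(y)$ together with the planar identity $|D\varphi(\varphi^{-1}(y))|=|D\varphi^{-1}(y)|\,|J(y,\varphi^{-1})|^{-1}$ and Hadamard's inequality $|J(y,\varphi^{-1})|\le|D\varphi^{-1}(y)|^{2}$ bound it by $\int_\Omega|D\varphi(x)|^{\beta}\,dx$, which is finite by Brennan's conjecture.

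The argument is essentially bookkeeping once Theorem~C is available, and I do not expect a genuine obstacle. The two points that require a little care are (i) matching the admissible range of $q$ with the Brennan interval for $\beta$ (done above by the elementary manipulations of $\beta=p(2-q)/(p-q)$), and (ii) checking that the constant comes out exactly in the form~\eqref{Eq_1}: for this it is cleanest to read the norm off from the analytic description in Theorem~\ref{CompTh} applied to $\varphi^{-1}$, rather than to propagate the constant coming out of Theorem~C through the duality step.
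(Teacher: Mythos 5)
Your proposal is correct and follows essentially the same route as the paper: boundedness and the admissible range of $q$ come from Theorem~C combined with the duality Theorem~\ref{theorem:duality} (your bookkeeping via $\beta=p(2-q)/(p-q)=p'(q'-2)/(q'-p')$ is just the paper's exponent computation rearranged), and the explicit constant in~\eqref{Eq_1} is obtained, exactly as in the paper, by applying Theorem~\ref{CompTh} to $\varphi^{-1}$ and factoring $|D\varphi^{-1}|^{p}/|J(\cdot,\varphi^{-1})|=\bigl(|D\varphi^{-1}|^{2}/|J(\cdot,\varphi^{-1})|\bigr)|D\varphi^{-1}|^{p-2}\le K\,|D\varphi^{-1}|^{p-2}$. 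Your separate treatment of the endpoint $q=1$ and the change-of-variables finiteness check are small refinements the paper omits, but they do not change the argument.
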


\begin{proof}
By Theorem~C we have a bounded composition operator
$$
\varphi^{\ast}: L^1_{q'}(\mathbb D)\to L^1_{p'}(\Omega)
$$
for $q' \in (2, +\infty)$ and $p'=q'\beta/(q'+\beta -2)$. 

Then, by Brennan's Conjecture, 
$p' \in (2\,, 2K\beta_0/((K-1)\beta_0+2))$. Now using Theorem~\ref{theorem:duality} we have
$$
p=\frac{p'}{p'-1} \in \left(\frac{2K\beta_0}{(K+1)\beta_0-2}\,,2\right).
$$

Since
$$
p=\frac{p'}{p'-1}=\frac{q' \beta}{q' \beta-(q'+\beta+2)},
$$
we obtain by direct calculations that
$$
q'=\frac{(4-2 \beta_0)p}{2K \beta_0-((K+1)\beta_0-2)p}.
$$

By Theorem~\ref{theorem:duality} $q=q'/(q'-1)$ and $q \leq p$. By elementary calculations
$$
1\leq q \leq \frac{(2\beta_0-4)p}{2K\beta_0-((K-1)\beta_0+2)p} < \frac{2p}{4K-(2K-1)p}. 
$$

Now we prove the inequality~(\ref{Eq_1}).
Let $f \in L^{1}_{p}(\Omega)\cap C^{\infty}(\Omega)$. Then the composition $g=(\varphi^{-1})^*(f) \in L_{1,\loc}^1(\mathbb D)$ \cite{VGR}.
Hence, using Theorem~\ref{CompTh} we obtain
\begin{multline*}
||g\,|\,L^{1}_{q}(\mathbb D)|| \leq 
\left(\int\limits_\mathbb D\left(\frac{|D\varphi^{-1}(y)|^p}{|J(y,\varphi^{-1})|}\right)^{\frac{q}{p-q}}dy \right)^{\frac{p-q}{pq}}
||f\,|\,L^{1}_{p}(\Omega)|| \\
{} = \left(\int\limits_\mathbb D\left(\frac{|D\varphi^{-1}(y)|^2 \cdot |D\varphi^{-1}(y)|^{p-2}}{|J(y,\varphi^{-1})|}\right)^{\frac{q}{p-q}}dy \right)^{\frac{p-q}{pq}}
||f\,|\,L^{1}_{p}(\Omega)|| \\
{} \leq K^{\frac{1}{p}}\left(\int\limits_\mathbb D |D\varphi^{-1}(y)|^{\frac{(p-2)q}{p-q}}dy \right)^{\frac{p-q}{pq}}
||f\,|\,L^{1}_{p}(\Omega)||.
\end{multline*} 

Approximating an arbitrary function $f \in L_p^1(\Omega)$ by smooth functions \cite{VU98,VU02}, 
we obtain the required inequality.   
\end{proof}

\section{Poincar\'e-Sobolev inequalities}

\textbf{Weighted Poincar\'e-Sobolev inequalities}. Let $\Omega \subset \mathbb R^2$
be a planar domain and let $v : \Omega \to \mathbb R$ be a real valued
function, $v>0$ a.~e. in $\Omega$. We consider the weighted Lebesgue space $L_p(\Omega,v)$, $1\leq p<\infty$, 
of measurable functions $f: \Omega \to \mathbb R$  with the finite norm
$$
\|f\,|\,L_{p}(\Omega,v)\|:= \left(\int\limits_\Omega|f(x)|^pv(x)dx \right)^{\frac{1}{p}}< \infty.
$$
It is a Banach space for the norm $\|f\,|\,L_{p}(\Omega,v)\|$.

Using Theorem~\ref{T2.1} we prove

\begin{theorem}\label{T2.2}
Suppose that $\Omega\subset\mathbb R^2$ is a simply connected domain and $h(x) =|J(x,\varphi)|$ is 
the quasiconformal weight defined by a $K$-quasiconformal homeomorphism 
$\varphi : \Omega \to \mathbb D$.Then for every $p \in (2K\beta_0/((K+1)\beta_0-2)\,, 2)$ and every function $f \in W^{1}_{p}(\Omega)$,
the inequality
$$
\inf\limits_{c \in \mathbb R}\left(\int\limits_\Omega |f(x)-c|^rh(x)dx\right)^{\frac{1}{r}} \leq B_{r,p}(\Omega,h)
\left(\int\limits_\Omega |\nabla f(x)|^p dx\right)^{\frac{1}{p}}
$$ 
holds for any $r$ such that
$$
1 \leq r < \frac{p}{2-p} \cdot \frac{2\beta_0-4}{K\beta_0}
$$ 
with the constant
$$
B_{r,p}(\Omega,h) \leq \inf_{q \in [1,2p/(4K-(2K-1)p))}\{\widetilde{K}_{p,q}(\mathbb D) \cdot B_{r,q}(\mathbb D) \}.
$$
\end{theorem}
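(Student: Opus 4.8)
The plan is to transfer the $(r,q)$-Poincar\'e-Sobolev inequality from the unit disc $\mathbb D$ to $\Omega$ by means of the bounded composition operator $(\varphi^{-1})^{*}:L^1_p(\Omega)\to L^1_q(\mathbb D)$ provided by Theorem~\ref{T2.1}, the quasiconformal weight $h(x)=|J(x,\varphi)|$ entering through the change of variables $y=\varphi(x)$.

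First I would fix $f\in W^1_p(\Omega)$ and $r$ with $1\le r<\frac{p}{2-p}\cdot\frac{2\beta_0-4}{K\beta_0}$, and pick $q$ in the admissible range of Theorem~\ref{T2.1}, namely $1\le q<\frac{(2\beta_0-4)p}{2K\beta_0-((K-1)\beta_0+2)p}$, chosen close enough to the right endpoint that $r\le\frac{2q}{2-q}$. Such a $q$ exists because
\[
2-\frac{(2\beta_0-4)p}{2K\beta_0-((K-1)\beta_0+2)p}=\frac{2K\beta_0(2-p)}{2K\beta_0-((K-1)\beta_0+2)p},
\]
so the Sobolev conjugate $\frac{2q}{2-q}$ increases to exactly $\frac{p}{2-p}\cdot\frac{2\beta_0-4}{K\beta_0}$ as $q$ approaches the endpoint; moreover one checks $q<p<2$, so $q$ is subcritical in $\mathbb D$ and lies in the range covered by Theorem~\ref{T2.1}.

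Now set $g:=(\varphi^{-1})^{*}(f)=f\circ\varphi^{-1}$. By Theorem~\ref{T2.1} we have $g\in L^1_q(\mathbb D)$ with
\[
\left(\int\limits_\mathbb D|\nabla g(y)|^q\,dy\right)^{1/q}\le\widetilde K_{p,q}(\mathbb D)\left(\int\limits_\Omega|\nabla f(x)|^p\,dx\right)^{1/p},
\]
where $\widetilde K_{p,q}(\mathbb D)<\infty$ is the constant of inequality~(\ref{Eq_1}). Since $q$ is subcritical and $r\le\frac{2q}{2-q}$, the $(r,q)$-Poincar\'e-Sobolev inequality holds in $\mathbb D$ with best constant $B_{r,q}(\mathbb D)$. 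Finally, as $\varphi$ is quasiconformal it is a.e.\ differentiable, it and $\varphi^{-1}$ enjoy Luzin's $N$-property, and $h(x)=|J(x,\varphi)|$ a.e.; hence the area (change of variables) formula gives, for every $c\in\mathbb R$,
\[
\int\limits_\Omega|f(x)-c|^r h(x)\,dx=\int\limits_\Omega|f(x)-c|^r|J(x,\varphi)|\,dx=\int\limits_\mathbb D|g(y)-c|^r\,dy.
\]
Taking the infimum over $c\in\mathbb R$ and chaining these three facts gives
\[
\inf_{c\in\mathbb R}\left(\int\limits_\Omega|f(x)-c|^r h(x)\,dx\right)^{1/r}\le\widetilde K_{p,q}(\mathbb D)\,B_{r,q}(\mathbb D)\left(\int\limits_\Omega|\nabla f(x)|^p\,dx\right)^{1/p},
\]
and minimizing over all admissible $q$ --- with the convention that a factor equals $+\infty$ whenever the corresponding constant is undefined (the $(r,q)$-inequality on $\mathbb D$ fails or the composition operator is unbounded), so that $q$ may be allowed to range over all of $[1,2p/(4K-(2K-1)p))$ --- yields the asserted estimate for $B_{r,p}(\Omega,h)$.

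I expect the only real, though still routine, difficulty to be the exponent bookkeeping --- verifying that the range of $r$ in the statement is precisely the set of Sobolev conjugates $2q/(2-q)$ of the exponents $q$ for which Theorem~\ref{T2.1} furnishes a bounded composition operator, together with the elementary inequalities $q<p<2$ --- after which the change of variables and the application of the disc inequality are immediate. Note also that no approximation of $f$ is needed here, since inequality~(\ref{Eq_1}) is already established for all of $L^1_p(\Omega)\supseteq W^1_p(\Omega)$.
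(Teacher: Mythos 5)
Your proposal is correct and follows essentially the same route as the paper: pull back via the composition operator of Theorem~\ref{T2.1}, apply the classical $(r,q)$-Poincar\'e--Sobolev inequality in $\mathbb D$, and return to $\Omega$ by the quasiconformal change of variables, with the same exponent bookkeeping showing $2q/(2-q)$ reaches $\frac{p}{2-p}\cdot\frac{2\beta_0-4}{K\beta_0}$ at the admissible endpoint. The only (harmless) deviations are that you invoke inequality~(\ref{Eq_1}) directly for all of $L^1_p(\Omega)$ instead of the paper's smooth-approximation step, and you make explicit the convention needed to let $q$ range over all of $[1,2p/(4K-(2K-1)p))$ in the infimum.
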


Here $B_{r,q}(\mathbb D)$ is the best constant in the (non-weight) Poincar\'e-Sobolev inequality in the unit disc
$\mathbb D \subset \mathbb R^2$ and 
$$
\widetilde{K}_{p,q}(\mathbb D) = K^{\frac{1}{p}} \left(\int\limits_\mathbb D |D\varphi^{-1}(y)|^{\frac{(p-2)q}{p-q}}dy \right)^{\frac{p-q}{pq}}. 
$$ 

\begin{proof} 
By conditions of the theorem there exists a $K$-quasiconformal mapping $\varphi : \Omega \to \mathbb D$ such that
$$
\int\limits _{\Omega}|D\varphi(x)|^{\beta}~dx<+\infty,\quad\text{for all}\quad\frac{4K}{2K+1}<\beta<\frac{2K\beta_0}{(K-1)\beta_0+2}.
$$ 
By Theorem \ref{T2.1}, if
\begin{equation}\label{IN2.2}
1\leq q \leq \frac{(2\beta_0-4)p}{2K\beta_0-((K-1)\beta_0+2)p} < \frac{2p}{4K-(2K-1)p} 
\end{equation}
then the inequality  
\begin{equation}\label{IN2.1}
||\nabla (f \circ \varphi^{-1}) \,|\, L_{q}(\mathbb D)|| \leq \widetilde{K}_{p,q}(\mathbb D) ||\nabla f \,|\, L_{p}(\Omega)||
\end{equation}
holds for every function $f \in L^{1}_{p}(\Omega)$.

Let $f \in L^{1}_{p}(\Omega) \cap C^1(\Omega)$. Then the function $g=f \circ \varphi^{-1}$ is defined almost everywhere in $\mathbb D$ and belongs to the Sobolev space $L^{1}_{q}(\mathbb D)$ \cite{VGR}. Hence, by the Sobolev embedding theorem $g=f \circ \varphi^{-1} \in W^{1,q}(\mathbb D)$ \cite{M}
and the classical Poincar\'e-Sobolev inequality,
\begin{equation}\label{IN2.3}
\inf_{c \in \mathbb R}||f \circ \varphi^{-1} -c \,|\, L_{r}(\mathbb D)|| \leq B_{r,q}(\mathbb D) ||\nabla (f \circ \varphi^{-1}) \,|\, L_{q}(\mathbb D)||
\end{equation}
holds for any $r$ such that
\begin{equation}
\label{ineqr}
1 \leq r \leq \frac{2q}{2-q}.
\end{equation}
By elementary calculations from the inequality \eqref{IN2.2}, it follows that
\begin{equation}
\label{ineqq}
\frac{2q}{2-q} \leq \frac{\beta_0-2}{K\beta_0} \cdot \frac{2p}{2-p} < \frac{1}{K} \cdot \frac{p}{2-p}. 
\end{equation} 

Combining inequalities (\ref{ineqr}) and (\ref{ineqq}) we conclude that the inequality \eqref{IN2.3}
holds for any $r$ such that
$$
1 \leq r \leq \frac{\beta_0-2}{K\beta_0} \cdot \frac{2p}{2-p} < \frac{1}{K} \cdot \frac{p}{2-p}.
$$ 

Using the change of variable formula for quasiconformal mappings \cite{VGR}, the classical Poincar\'e-Sobolev inequality for the unit disc 
$$
\inf\limits_{c \in \mathbb R}\left(\int\limits_{\mathbb D} |g(y)-c|^rdy\right)^{\frac{1}{r}} \leq B_{r,q}(\mathbb D)
\left(\int\limits_{\mathbb D} |\nabla g(y)|^q dy\right)^{\frac{1}{q}}
$$
and inequality \eqref{IN2.1}, we finally infer 
\begin{multline*}
\inf\limits_{c \in \mathbb R}\left(\int\limits_\Omega |f(x)-c|^rh(x)dx\right)^{\frac{1}{r}} =
\inf\limits_{c \in \mathbb R}\left(\int\limits_\Omega |f(x)-c|^r |J(x,\varphi)| dx\right)^{\frac{1}{r}} \\
{} = \inf\limits_{c \in \mathbb R}\left(\int\limits_{\mathbb D} |g(y)-c|^rdy\right)^{\frac{1}{r}} \leq B_{r,q}(\mathbb D) 
\left(\int\limits_{\mathbb D} |\nabla g(y)|^q dy\right)^{\frac{1}{q}} \\
{} \leq \widetilde{K}_{p,q}(\mathbb D) B_{r,q}(\mathbb D) 
\left(\int\limits_{\Omega} |\nabla f(x)|^p dx\right)^{\frac{1}{p}}.
\end{multline*}

Approximating an arbitrary function $f \in W^{1}_{p}(\Omega)$ by smooth functions we have
$$
\inf\limits_{c \in \mathbb R}\left(\int\limits_\Omega |f(x)-c|^rh(x)dx\right)^{\frac{1}{r}} \leq
B_{r,p}(\Omega,h) \left(\int\limits_{\Omega} |\nabla f(x)|^p dx\right)^{\frac{1}{p}}
$$
with the constant
$$
B_{r,p}(\Omega,h) \leq \inf_{q \in [1,2p/(4K-(2K-1)p))}\{\widetilde{K}_{p,q}(\mathbb D) \cdot B_{r,q}(\mathbb D)\}.
$$
\end{proof}

The property of the quasiconformal $\alpha$-regularity implies the integrability of a 
Jacobian of quasiconformal mappings and therefore for any quasiconformal $\alpha$-regular domain
we have the embedding of weighted Lebesgue spaces $L_r(\Omega,h)$ into non-weighted Lebesgue
spaces $L_s(\Omega)$ for $s=\frac{\alpha -2}{\alpha}r$.

\begin{lemma} \label{L2.3}
Let $\Omega$ be a $K$-quasiconformal $\alpha$-regular domain.Then for any function
$f \in L_r(\Omega,h)$, $\alpha / (\alpha - 2) \leq r < \infty$, the inequality
$$
||f\,|\,L_s(\Omega)|| \leq \left(\int\limits_\mathbb D \big|J(y,\varphi^{-1})\big|^{\frac{\alpha}{2}}~dy \right)^{{\frac{2}{\alpha}} \cdot \frac{1}{s}} ||f\,|\,L_r(\Omega,h)||
$$
holds for $s=\frac{\alpha -2}{\alpha}r$.
\end{lemma}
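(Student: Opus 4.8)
The plan is to apply the change of variables formula for the quasiconformal mapping $\varphi:\Omega\to\mathbb D$ to rewrite the weighted integral over $\Omega$ as a non-weighted integral over $\mathbb D$, and then to split off the Jacobian factor using H\"older's inequality with the exponent dictated by the $\alpha$-regularity hypothesis.

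First I would observe that since $h(x)=|J(x,\varphi)|$ and $\varphi$ is quasiconformal (hence satisfies the Luzin $N$-property and the change of variables formula, as recalled in Section~2), for $g=f\circ\varphi^{-1}$ we have
$$
\|f\mid L_r(\Omega,h)\|^r=\int\limits_\Omega |f(x)|^r|J(x,\varphi)|\,dx=\int\limits_{\mathbb D}|g(y)|^r\,dy=\|g\mid L_r(\mathbb D)\|^r.
$$
Next, again by the change of variables formula applied to $\varphi^{-1}:\mathbb D\to\Omega$,
$$
\|f\mid L_s(\Omega)\|^s=\int\limits_\Omega |f(x)|^s\,dx=\int\limits_{\mathbb D}|g(y)|^s|J(y,\varphi^{-1})|\,dy.
$$
Now I would apply H\"older's inequality to the last integral with conjugate exponents $\frac{r}{s}$ and $\frac{r}{r-s}$. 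Since $s=\frac{\alpha-2}{\alpha}r$, one computes $\frac{r}{s}=\frac{\alpha}{\alpha-2}$ and $\frac{r}{r-s}=\frac{\alpha}{2}$, so
$$
\int\limits_{\mathbb D}|g(y)|^s|J(y,\varphi^{-1})|\,dy\leq\left(\int\limits_{\mathbb D}|g(y)|^r\,dy\right)^{\frac{s}{r}}\left(\int\limits_{\mathbb D}|J(y,\varphi^{-1})|^{\frac{\alpha}{2}}\,dy\right)^{\frac{2}{\alpha}}.
$$
Taking $s$-th roots and using the first identity $\int_{\mathbb D}|g|^r=\|f\mid L_r(\Omega,h)\|^r$ then yields exactly
$$
\|f\mid L_s(\Omega)\|\leq\left(\int\limits_{\mathbb D}|J(y,\varphi^{-1})|^{\frac{\alpha}{2}}\,dy\right)^{\frac{2}{\alpha}\cdot\frac{1}{s}}\|f\mid L_r(\Omega,h)\|,
$$
which is the claimed inequality. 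The constraint $r\geq\alpha/(\alpha-2)$ guarantees $s\geq 1$, so the norms are genuine, and the finiteness of the Jacobian integral is precisely the definition of $K$-quasiconformal $\alpha$-regularity, so the right-hand side is finite.

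The only genuine subtlety — and the step I would be most careful with — is justifying the change of variables formula in both directions for merely $L_r$ functions composed with a quasiconformal homeomorphism; this is standard for quasiconformal maps because they and their inverses possess the Luzin $N$-property and are differentiable a.e.\ with $|J(x,\varphi)|=J_\varphi(x)$ a.e.\ (as recalled in Section~2), so the area formula applies and no exceptional null sets cause trouble. Everything else is the bookkeeping of the H\"older exponents, which closes exactly because of the arithmetic relation between $r$, $s$, and $\alpha$.
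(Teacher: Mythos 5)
Your proof is correct and is essentially the same argument as the paper's: both rest on the change of variables formula for the quasiconformal map together with H\"older's inequality with exponents $\frac{\alpha}{\alpha-2}$ and $\frac{\alpha}{2}$. The only (cosmetic) difference is that you pull both integrals back to $\mathbb D$ before applying H\"older, whereas the paper splits the weight as $|J(x,\varphi)|^{s/r}|J(x,\varphi)|^{-s/r}$, applies H\"older on $\Omega$, and only then transfers the Jacobian factor to $\mathbb D$.
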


\begin{proof}
By the assumptions of the lemma these exists a $K$-quasiconformal mapping $\varphi : \Omega \to \mathbb D$ such that
$$
\int\limits_\mathbb D \big|J(y,\varphi^{-1})\big|^{\frac{\alpha}{2}}~dy  < +\infty,
$$
Let $s=\frac{\alpha -2}{\alpha}r$. 
Then using the change of variable formula for quasiconformal mappings \cite{VGR}, H\"older's inequality with exponents $(r,rs/(r-s))$ and equality 
$|J(x,\varphi)| = h(x)$, we obtain
\begin{multline*}
||f\,|\,L_s(\Omega)|| \\
{} = \left(\int\limits_{\Omega} |f(x)|^s dx\right)^{\frac{1}{s}} = 
\left(\int\limits_{\Omega} |f(x)|^s \big|J(x,\varphi)\big|^{\frac{s}{r}} \big|J(x,\varphi)\big|^{-\frac{s}{r}}~dx\right)^{\frac{1}{s}} \\
{} \leq \left(\int\limits_{\Omega} |f(x)|^r |J(x,\varphi)| dx\right)^{\frac{1}{r}}
 \left(\int\limits_{\Omega} \big|J(x,\varphi)\big|^{- \frac{s}{r-s}}~ dx\right)^{\frac{r-s}{rs}} \\
{} \leq \left(\int\limits_{\Omega} |f(x)|^r h(x)~ dx\right)^{\frac{1}{r}} 
\left(\int\limits_{\mathbb D} \big|J(y,\varphi^{-1})\big|^{\frac{r}{r-s}}~ dy\right)^{\frac{r-s}{rs}} \\
{} = \left(\int\limits_{\Omega} |f(x)|^r h(x)~ dx\right)^{\frac{1}{r}}
\left(\int\limits_\mathbb D \big|J(y,\varphi^{-1})\big|^{\frac{\alpha}{2}}~dy \right)^{{\frac{2}{\alpha}} \cdot \frac{1}{s}}.
\end{multline*}
 
\end{proof}

The following theorem gives the upper estimate of the Poincar\'e constant and follows from Theorem \ref{T2.2} and Lemma \ref{L2.3}:

\begin{theorem}\label{T2.4}
Suppose that $\Omega \subset \mathbb C$ is a $K$-quasiconformal $\alpha$-regular domain.
Then for every
$$
p \in \left(\max \left\{\frac{4K}{2K+1},\frac{2K \alpha \beta_0}{(K+2) \alpha \beta_0-4(\alpha + \beta_0 -2)}\right\} \,, 2\right),
$$ 
every
$$
s\in \left[1,\, \frac{(\alpha-2)(\beta_0 -2)}{K \alpha \beta_0} \cdot \frac{2p}{2-p}\right]
$$
and every function $f \in W^{1}_{p}(\Omega)$, the inequality 
$$
\inf\limits_{c \in \mathbb R}\left(\int\limits_\Omega |f(x)-c|^sdx\right)^{\frac{1}{s}} \leq B_{s,p}(\Omega)
\left(\int\limits_\Omega |\nabla f(x)|^p dx\right)^{\frac{1}{p}}
$$ 
holds with the constant
\begin{multline*}
B_{s,p}(\Omega) \leq \left(\int\limits_\mathbb D \big|J(y,\varphi^{-1})\big|^{\frac{\alpha}{2}}~dy \right)^{{\frac{2}{\alpha}} \cdot \frac{1}{s}} B_{r,p}(\Omega,h) \\
{} \leq \inf_{q \in I}\left\{B_{\frac{\alpha s}{\alpha -2},q}(\mathbb D)  \cdot \widetilde{K}_{p,q}(\mathbb D)
\cdot \left(\int\limits_\mathbb D \big|J(y,\varphi^{-1})\big|^{\frac{\alpha}{2}}~dy \right)^{{\frac{2}{\alpha}} \cdot \frac{1}{s}}\right\},
\end{multline*}
where  $I=[1,2p/(4K-(2K-1)p))$.
\end{theorem}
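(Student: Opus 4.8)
The plan is to obtain the non-weighted Poincar\'e--Sobolev inequality in $\Omega$ by chaining together the weighted inequality of Theorem~\ref{T2.2} with the weighted-to-non-weighted embedding of Lemma~\ref{L2.3}. Set $r := \alpha s/(\alpha-2)$, so that $s = \frac{\alpha-2}{\alpha}r$ is precisely the exponent relation appearing in Lemma~\ref{L2.3}; since $s \geq 1$ this forces $r \geq \alpha/(\alpha-2)$, so the hypothesis on $r$ in that lemma is automatic.

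First I fix $f \in W^1_p(\Omega)$ and an arbitrary $c \in \mathbb{R}$ and apply Lemma~\ref{L2.3} to $f-c$ (which lies in $L_r(\Omega,h)$, the left-hand side of Theorem~\ref{T2.2} being finite), getting
\[
\|f-c \mid L_s(\Omega)\| \leq \left(\int_{\mathbb D}|J(y,\varphi^{-1})|^{\frac{\alpha}{2}}\,dy\right)^{\frac{2}{\alpha}\cdot\frac{1}{s}}\|f-c\mid L_r(\Omega,h)\|.
\]
Because the geometric factor does not depend on $c$, I may take the infimum over $c$ on both sides and then apply Theorem~\ref{T2.2} with target exponent $r$; this already yields the first claimed bound $B_{s,p}(\Omega) \leq \left(\int_{\mathbb D}|J(y,\varphi^{-1})|^{\alpha/2}\,dy\right)^{\frac{2}{\alpha}\cdot\frac{1}{s}} B_{r,p}(\Omega,h)$. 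Inserting the estimate $B_{r,p}(\Omega,h)\leq\inf_{q\in I}\{\widetilde{K}_{p,q}(\mathbb D)\,B_{r,q}(\mathbb D)\}$ of Theorem~\ref{T2.2}, and recalling that $r=\alpha s/(\alpha-2)$ so that $B_{r,q}(\mathbb D) = B_{\frac{\alpha s}{\alpha-2},q}(\mathbb D)$, gives the second, explicit bound; passing to the infimum over $q\in I$ on the right completes this part. The approximation of a general $f\in W^1_p(\Omega)$ by smooth functions needed to justify the above is already carried out inside Theorem~\ref{T2.2} and Lemma~\ref{L2.3}.

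The substantive part is the bookkeeping of admissible exponents, which is where I expect the (elementary but fiddly) difficulty to lie. Theorem~\ref{T2.2} applies to target exponents with $1\leq r<\frac{p}{2-p}\cdot\frac{2\beta_0-4}{K\beta_0}$; substituting $r=\alpha s/(\alpha-2)$ turns this into exactly $1\leq s<\frac{(\alpha-2)(\beta_0-2)}{K\alpha\beta_0}\cdot\frac{2p}{2-p}$, the range of $s$ in the statement. Demanding that this interval be non-empty, i.e. that $\frac{2p}{2-p}>\frac{K\alpha\beta_0}{(\alpha-2)(\beta_0-2)}$, is a linear inequality in $p$ whose solution, after clearing denominators and using $2(\alpha-2)(\beta_0-2)+K\alpha\beta_0=(K+2)\alpha\beta_0-4(\alpha+\beta_0-2)$, is $p>\frac{2K\alpha\beta_0}{(K+2)\alpha\beta_0-4(\alpha+\beta_0-2)}$. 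One also needs $p>4K/(2K+1)$ so that the index set $I=[1,2p/(4K-(2K-1)p))$ is non-empty (equivalently $2p/(4K-(2K-1)p)>1$) and so that the composition-operator results of Theorem~C and Theorem~\ref{T2.1} underlying Theorem~\ref{T2.2} are in force. The maximum of these two thresholds, together with the upper bound $p<2$ inherited from Theorem~\ref{T2.2}, is precisely the admissible range of $p$ in the statement, which finishes the proof.
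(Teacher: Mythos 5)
Your proposal is correct and follows essentially the same route as the paper's own proof: chain Lemma~\ref{L2.3} (with $s=\frac{\alpha-2}{\alpha}r$) with the weighted inequality of Theorem~\ref{T2.2}, then translate the admissible range of $r$ into the stated range of $s$ and derive the lower threshold on $p$ from non-emptiness of that range. Your exponent bookkeeping is in fact more explicit than the paper's, which compresses it into ``by direct calculations.''
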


\begin{proof}
Let $f\in W^1_p(\Omega)$. Then by Theorem \ref{T2.2} and Lemma \ref{L2.3} we obtain
\begin{multline*}
\inf_{c \in \mathbb R} \left(\int\limits_{\Omega} |f(x)-c|^s dx\right)^{\frac{1}{s}} \\
{} \leq \left(\int\limits_\mathbb D \big|J(y,\varphi^{-1})\big|^{\frac{\alpha}{2}}~dy \right)^{{\frac{2}{\alpha}} \cdot \frac{1}{s}}
\inf_{c \in \mathbb R} \left(\int\limits_{\Omega} |f(x)-c|^r h(x) dx\right)^{\frac{1}{r}} \\
{} \leq B_{r,p}(\Omega,h) 
\left(\int\limits_\mathbb D \big|J(y,\varphi^{-1})\big|^{\frac{\alpha}{2}}~dy \right)^{{\frac{2}{\alpha}} \cdot \frac{1}{s}}
\left(\int\limits_\Omega |\nabla f(x)|^p dx\right)^{\frac{1}{p}}
\end{multline*} 
for $s\geq 1$.

Because by Lemma \ref{L2.3} $s=\frac{\alpha -2}{\alpha}r$ and by Theorem \ref{T2.2}
$$
1 \leq r \leq \frac{\beta_0-2}{K\beta_0} \cdot \frac{2p}{2-p} < \frac{1}{K} \cdot \frac{p}{2-p},
$$  
then
$$
1 \leq s \leq \frac{(\alpha-2)(\beta_0 -2)}{K \alpha \beta_0} \cdot \frac{2p}{2-p} < \frac{\alpha -2}{K\alpha} \cdot \frac{p}{2-p}. 
$$
Hence, by direct calculations, we obtain that
$$
p \geq \frac{2K \alpha \beta_0}{(K+2) \alpha \beta_0-4(\alpha + \beta_0 -2)} > \frac{2K \alpha}{(K+1)\alpha - 2}.
$$

The last inequality holds provided that the Brennan's conjecture holds true all
$\beta: 4K/(2K+1)< \beta < 4K/(2K-1)$.
\end{proof}

In the case of $(p,p)$-Poincar\'e-Sobolev inequalities we have:

\begin{theorem}\label{T3.4}
Let $\Omega\subset\mathbb R^2$ be a $K$-quasiconformal $\alpha$-regular domain and $\varphi : \Omega \to \mathbb D$ be a $K$-quasiconformal mapping. 
Then for every
$$
p \in \left(\max \left\{\frac{4K}{2K+1},\frac{2(K-1)\alpha \beta_0+4(\alpha+ \beta_0-2)}{K \alpha \beta_0}\right\} \,, 2\right)
$$  
the $p$-Poincar\'e-Sobolev inequality 
$$
\inf_{c \in \mathbb{R}} ||f-c\,|\,L_p(\Omega)|| \leq B_{p,p}(\Omega) ||\nabla f\,|\,L_p(\Omega)||,  \quad f \in W^{1}_{p}(\Omega),  
$$
holds with the constant
$$
B^p_{p,p}(\Omega)
 \leq 
\inf_{q \in I}
\left\{
B_{\frac{\alpha p}{\alpha -2},q}^p(\mathbb D)
\cdot \widetilde{K}_{p,q}^p(\mathbb D) 
\cdot \left(\int\limits_\mathbb D \big|J(y,\varphi^{-1})\big|^{\frac{\alpha}{2}}~dy \right)^{\frac{2}{\alpha}} 
\right\},
$$
where $I=[1,2p/(4K-(2K-1)p))$.
\end{theorem}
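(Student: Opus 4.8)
The plan is to derive Theorem~\ref{T3.4} as the diagonal case $s=p$ of Theorem~\ref{T2.4}. The $(p,p)$-Poincar\'e--Sobolev inequality asserted here is, after raising both sides to the power $p$, exactly the inequality produced by Theorem~\ref{T2.4} when the left-hand exponent is specialized to $s=p$. So the entire argument reduces to two bookkeeping points: (i) that $s=p$ is an admissible choice under the stated range for $p$, and (ii) rewriting the resulting constant in the displayed form.

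First I would settle the admissibility range. Theorem~\ref{T2.4} permits any $s$ with $1\le s\le \frac{(\alpha-2)(\beta_0-2)}{K\alpha\beta_0}\cdot\frac{2p}{2-p}$, under the hypothesis $p>\max\{4K/(2K+1),\,\frac{2K\alpha\beta_0}{(K+2)\alpha\beta_0-4(\alpha+\beta_0-2)}\}$. Taking $s=p$ and clearing denominators, the upper constraint on $s$ turns into $p\ge 2-\frac{2(\alpha-2)(\beta_0-2)}{K\alpha\beta_0}$; expanding $(\alpha-2)(\beta_0-2)=\alpha\beta_0-2\alpha-2\beta_0+4$ shows the right-hand side equals $\frac{2(K-1)\alpha\beta_0+4(\alpha+\beta_0-2)}{K\alpha\beta_0}$, which is precisely the second term inside the $\max$ in the hypothesis of Theorem~\ref{T3.4}. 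I would then verify, by a short cross-multiplication that simplifies considerably and uses only $\alpha,\beta_0>2$, that this quantity is at least $\frac{2K\alpha\beta_0}{(K+2)\alpha\beta_0-4(\alpha+\beta_0-2)}$, so that the single condition on $p$ written in Theorem~\ref{T3.4} simultaneously makes Theorem~\ref{T2.4} applicable and places $s=p$ inside its admissible interval. The range $q\in I=[1,2p/(4K-(2K-1)p))$ is carried over unchanged.

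Second, with $s=p$ inserted into the conclusion of Theorem~\ref{T2.4}, the Sobolev exponent $\frac{\alpha s}{\alpha-2}$ becomes $\frac{\alpha p}{\alpha-2}$ and the Jacobian factor $\bigl(\int_{\mathbb{D}}|J(y,\varphi^{-1})|^{\alpha/2}\,dy\bigr)^{\frac{2}{\alpha}\cdot\frac{1}{s}}$ becomes the same integral to the power $\frac{2}{\alpha p}$. Raising the whole estimate for $B_{p,p}(\Omega)$ to the $p$-th power then yields exactly the displayed bound for $B^p_{p,p}(\Omega)$: the factor $\widetilde{K}_{p,q}(\mathbb{D})$ becomes $\widetilde{K}_{p,q}^p(\mathbb{D})$, the constant $B_{\alpha p/(\alpha-2),q}(\mathbb{D})$ becomes $B^p_{\alpha p/(\alpha-2),q}(\mathbb{D})$, and the Jacobian factor raised to the power $p$ clears the $1/p$ in its exponent, leaving $\bigl(\int_{\mathbb{D}}|J(y,\varphi^{-1})|^{\alpha/2}\,dy\bigr)^{2/\alpha}$.

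I do not anticipate a genuine obstacle: Theorem~\ref{T3.4} is a corollary of Theorem~\ref{T2.4}, and the only delicate point is the exponent arithmetic --- in particular confirming that the two competing lower bounds on $p$ (one from the applicability of Theorem~\ref{T2.4}, one from forcing $s=p$ into its admissible interval) consolidate into the single $\max$ written in the statement, which is exactly where a sign slip would most easily arise.
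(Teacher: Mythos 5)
Your proposal is correct and matches the paper's own argument: the paper obtains Theorem~\ref{T3.4} precisely by taking the diagonal case $s=p$ (equivalently $r=\alpha p/(\alpha-2)$) in Theorem~\ref{T2.2} combined with Lemma~\ref{L2.3}, i.e.\ in Theorem~\ref{T2.4}, and the same exponent arithmetic produces the lower bound $p\ge\bigl(2(K-1)\alpha\beta_0+4(\alpha+\beta_0-2)\bigr)/(K\alpha\beta_0)$ and the stated constant. One small remark: the consolidation of the two lower bounds on $p$ is immediate, since $s=p\ge1$ lying in the admissible $s$-interval already forces that interval to be nonempty, which is exactly the second hypothesis of Theorem~\ref{T2.4}; your cross-multiplication also works, but it relies on $\beta_0\le4$ (true here, $\beta_0=3.752$), not merely on $\alpha,\beta_0>2$.
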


\begin{proof}
By Lemma \ref{L2.3} $p=\frac{\alpha -2}{\alpha}r$ and
by Theorem \ref{T2.2}
$$
1 \leq r \leq \frac{\beta_0-2}{K\beta_0} \cdot \frac{2p}{2-p} < \frac{1}{K} \cdot \frac{p}{2-p}.
$$ 
Hence
$$
 \frac{\alpha}{\alpha -2} \leq \frac{\beta_0-2}{K\beta_0} \cdot \frac{2}{2-p} < \frac{1}{K} \cdot \frac{1}{2-p}. 
$$
By elementary calculations
$$
p \geq \frac{2(K-1)\alpha \beta_0 +4(\alpha +\beta_0 -2)}{K \alpha \beta_0}>\frac{(2K-1)\alpha +2}{K \alpha}.
$$

The last inequality is correct by factor that Brennan's conjecture is correct for all
$\beta: 4K/(2K+1)< \beta < 4K/(2K-1)$.
\end{proof}

Theorem~\ref{T3.4} implies the lower estimates of the first non-trivial eigenvalue $\mu_p^{(1)}(\Omega)$:

\begin{theorem}\label{T3.5}
Let $\varphi : \Omega \to \mathbb D$ be a $K$-quasiconformal homeomorphism from a 
$K$-quasiconformal $\alpha$-regular domain $\Omega$ to the unit disc $\mathbb D$.  
Then for every
$$
p \in \left(\max \left\{\frac{4K}{2K+1},\frac{2(K-1)\alpha \beta_0+4(\alpha+ \beta_0-2)}{K \alpha \beta_0}\right\} \,, 2\right)
$$ 
the following inequality holds
$$
\frac{1}{\mu_p^{(1)}(\Omega)} 
 \leq 
 \inf_{q \in I}
\left\{
B_{\frac{\alpha p}{\alpha -2},q}^p(\mathbb D)
\cdot \widetilde{K}_{p,q}^p(\mathbb D) 
\cdot \left(\int\limits_\mathbb D \big|J(y,\varphi^{-1})\big|^{\frac{\alpha}{2}}~dy \right)^{\frac{2}{\alpha}} 
\right\},
$$
where $I=[1,2p/(4K-(2K-1)p))$.
\end{theorem}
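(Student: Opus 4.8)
The plan is to reduce the statement to Theorem~\ref{T3.4} by way of the variational characterization of the first nontrivial Neumann eigenvalue. The key fact — recorded in \S1.3 and attributed to \cite{BCT15} — is that $\mu_p^{(1)}(\Omega)^{-1/p}$ equals the best constant $B_{p,p}(\Omega)$ in the $(p,p)$-Poincar\'e--Sobolev inequality on $\Omega$; equivalently,
\[
\frac{1}{\mu_p^{(1)}(\Omega)} = B_{p,p}^p(\Omega).
\]
First I would make this identification explicit: a function $u\in W^1_p(\Omega)$ maximizing the Rayleigh-type quotient $\inf_{c\in\mathbb R}\|f-c\mid L_p(\Omega)\|^p/\|\nabla f\mid L_p(\Omega)\|^p$ is, after the normalization $\int_\Omega |u|^{p-2}u\,dx=0$, a weak solution of the Neumann problem from the Introduction with $\mu_p=\mu_p^{(1)}(\Omega)$, and conversely the first nonzero eigenvalue realizes this quotient. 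This step uses only that $\Omega$ supports \emph{some} $(p,p)$-Poincar\'e inequality, which holds here because $\Omega$ is a $K$-quasiconformal $\alpha$-regular domain (Theorem~\ref{T2.4}, or Theorem~\ref{T3.4} directly): the Poincar\'e property yields compactness of the embedding of the zero-mean subspace of $W^1_p(\Omega)$ into $L_p(\Omega)$, hence attainment of the supremum, and the Euler--Lagrange equation of the constrained maximization is exactly the weak form stated above.

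Granting the identity $1/\mu_p^{(1)}(\Omega)=B_{p,p}^p(\Omega)$, the conclusion is immediate. For $p$ in the range
\[
p\in\left(\max\left\{\frac{4K}{2K+1},\,\frac{2(K-1)\alpha\beta_0+4(\alpha+\beta_0-2)}{K\alpha\beta_0}\right\},\,2\right),
\]
which is precisely the admissible range in Theorem~\ref{T3.4}, that theorem supplies
\[
B_{p,p}^p(\Omega)\leq \inf_{q\in I}\left\{B_{\frac{\alpha p}{\alpha-2},q}^p(\mathbb D)\cdot\widetilde{K}_{p,q}^p(\mathbb D)\cdot\left(\int\limits_\mathbb D\big|J(y,\varphi^{-1})\big|^{\frac{\alpha}{2}}\,dy\right)^{\frac{2}{\alpha}}\right\},\qquad I=\left[1,\frac{2p}{4K-(2K-1)p}\right).
\]
Substituting $B_{p,p}^p(\Omega)=1/\mu_p^{(1)}(\Omega)$ on the left-hand side yields exactly the claimed estimate.

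The one point that deserves genuine care, rather than a routine citation, is the equivalence $\mu_p^{(1)}(\Omega)^{-1/p}=B_{p,p}(\Omega)$ itself: for $p\neq 2$ the $p$-Laplacian is nonlinear, so the spectral theorem is unavailable and one must argue by the direct method — lower semicontinuity of $\|\nabla f\mid L_p(\Omega)\|$ along a maximizing sequence, compactness extracted from the Poincar\'e inequality, and a check that the mean-value normalization passes to the limit — after which the first eigenfunction and the extremal of the quotient coincide. Since this is standard for bounded domains enjoying the Poincar\'e property and is already in the literature \cite{BCT15}, I would simply invoke it; modulo that citation, Theorem~\ref{T3.5} is a one-line consequence of Theorem~\ref{T3.4}, and no further obstacle arises.
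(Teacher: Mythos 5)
Your proposal is correct and follows essentially the same route as the paper: the authors state Theorem~\ref{T3.5} as an immediate consequence of Theorem~\ref{T3.4}, using the variational identity $\mu_p^{(1)}(\Omega)^{-1/p}=B_{p,p}(\Omega)$ quoted from \cite{BCT15} in the Introduction, exactly as you do. Your additional discussion of why this identity holds for the nonlinear case (direct method, compactness from the Poincar\'e inequality) is more detail than the paper provides, but it does not change the argument.
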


\section{Estimates in the some domains}

$\mathbf{Example \, A.}$ The homeomorphism 
$$
w= Az+B \overline{z}, \quad z=x+iy, \quad A>B \geq 0,
$$
is $K$-quasiconformal with $K=\frac{A+B}{A-B}$ and maps the unit disc $\mathbb D$ 
onto the interior of the ellipse
$$
\Omega_e= \left\{(x,y) \in \mathbb R^2: \frac{x^2}{(A+B)^2}+\frac{y^2}{(A-B)^2}=1\right\}.
$$

We calculate the norm of the derivative of mapping $w$ by the formula 
$$
|Dw|=|w_z|+|w_{\overline{z}}|
$$
and the Jacobian of mapping $w$ by the formula 
$$
J(z,w)=|w_z|^2-|w_{\overline{z}}|^2.
$$
Here
$$
w_z=\frac{1}{2}\left(\frac{\partial w}{\partial x}-i\frac{\partial w}{\partial y}\right) \quad \text{and} \quad 
w_{\overline{z}}=\frac{1}{2}\left(\frac{\partial w}{\partial x}+i\frac{\partial w}{\partial y}\right).
$$ 

By elementary calculations
$$
w_z=A \quad \text{and} \quad w_{\overline{z}}=B.
$$ 
Hence
$$
|Dw|=A+B \quad \text{and} \quad J(z,w)=A^2-B^2.
$$

Then by Theorem~\ref{T3.5} we have
\begin{multline*}
\frac{1}{\mu_p^{(1)}(\Omega_e)} \leq \inf_{q \in I}
\left(\frac{2}{\pi^d}\left(\frac{1-d}{1/2-d}\right)^{1-d}\right)^p \\
{} \times
\frac{A+B}{A-B} \left(\int\limits_\mathbb D (A^2-B^2)^{\frac{\alpha}{2}}~dy \right)^{\frac{2}{\alpha}} 
\left(\int\limits_\mathbb D (A+B)^{\frac{(p-2)q}{p-q}}~dy \right)^{\frac{p-q}{q}} \\
{} = \inf_{q \in I}
\left(\frac{2}{\pi^d}\left(\frac{1-d}{1/2-d}\right)^{1-d}\right)^p
(A+B)^p \pi^{\frac{2q+ \alpha (p-q)}{\alpha q}}, 
\end{multline*}
where $I=[1,2p/(4K-(2K-1)p))$ and $d=1/q-(\alpha -2) \alpha p$.

\vskip 0.3cm
$\mathbf{Example \, B.}$ The homeomorphism 
$$
w= \left(|z|^{k-1}z+1\right)^2, \quad z=x+iy, \quad k\geq 1,
$$
is $k$-quasiconformal and maps the unit disc $\mathbb D$ onto the interior of the cardioid
$$
\Omega_c= \left\{(x,y) \in \mathbb R^2: (x^2+y^2-2x)^2-4(x^2+y^2)=0\right\}.
$$ 

We calculate the partial derivatives of mapping $w$
$$
w_z=(k+1)|z|^{k-1}\left(|z|^{k-1}z+1\right) \quad \text{and} \quad w_{\overline{z}}=(k-1)|z|^{k-3}z^2\left(|z|^{k-1}z+1\right).
$$ 
Hence
$$
|Dw|=2k|z|^{k-1}\sqrt{|z|^{2k}+|z|^{k-1}(z+\overline{z})+1}
$$
and
$$
J(z,w)=4k|z|^{2k-2}\left(|z|^{2k}+|z|^{k-1}(z+\overline{z})+1\right).
$$
Then by Theorem~\ref{T3.5} we have
\begin{multline*}
\frac{1}{\mu_p^{(1)}(\Omega_c)} \leq \inf_{q \in I} (2k)^p
\left(\frac{2}{\pi^d}\left(\frac{1-d}{1/2-d}\right)^{1-d}\right)^p \\
{} \times
\left(\int\limits_0^{2 \pi} \left(\int\limits_0^1 \left(\rho^{2k-2}\left(\rho^{2k}+2 \rho^k \cos \psi +1\right)\right)^{\frac{\alpha}{2}} \rho~d \rho\right)~d \psi \right)^{\frac{2}{\alpha}} \\
{} \times
\left(\int\limits_0^{2 \pi} \left(\int\limits_0^1 \left(\rho^{k-1} \sqrt{\rho^{2k}+2 \rho^k \cos \psi +1}\right)^{\frac{(p-2)q}{p-q}} \rho~d \rho\right)~d \psi \right)^{\frac{p-q}{q}}.
\end{multline*}
Here $I=[1,2p/(4k-(2k-1)p))$ and $d=1/q-(\alpha -2) \alpha p$.

\vskip 0.3cm
$\mathbf{Example \, C.}$ The homeomorphism 
$$
w= |z|^{k}z, \quad z=x+iy, \quad k\geq 0,
$$
is $(k+1)$-quasiconformal and maps the square 
$$
Q:=\left\{(x,y) \in \mathbb R^2:-\frac{\sqrt{2}}{2} <x< \frac{\sqrt{2}}{2},\, -\frac{\sqrt{2}}{2} <y< \frac{\sqrt{2}}{2}\right\}
$$ 
onto star-shaped domains $\Omega_{\varepsilon}^*$ with vertices $(\pm \sqrt{2}/2,\, \pm \sqrt{2}/2),
(\pm \varepsilon,\,0)$ and $(0,\, \pm \varepsilon)$, where $\varepsilon = (\sqrt{2}/2)^{k+1}$.

\begin{figure}[ht!]
\centering
\includegraphics[width=0.5\textwidth]{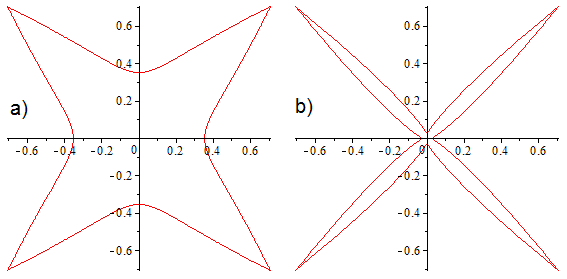}
\caption{Domains $\Omega_{\varepsilon}^*$ under $\varepsilon=\frac{1}{2\sqrt{2}}$ and  $\varepsilon=\frac{1}{32}$.}
\end{figure}

We calculate the partial derivatives of mapping $w$
$$
w_z=(\frac{k}{2}+1)|z|^{k} \quad \text{and} \quad w_{\overline{z}}=\frac{k}{2}|z|^{k-2}z^2.
$$ 
Thus
$$
|Dw|=(k+1)|z|^k \quad \text{and} \quad J(z,w)=(k+1)|z|^{2k}.
$$

Then by Theorem~\ref{T3.5} we have
\begin{multline*}
\frac{1}{\mu_p^{(1)}(\Omega_{\varepsilon}^*)} \leq \inf_{q \in I} (k+1)^p
\left(\frac{2}{\pi^d}\left(\frac{1-d}{1/2-d}\right)^{1-d}\right)^p \\
{} \times
\left(\int\limits_{-\frac{\sqrt{2}}{2}}^{\frac{\sqrt{2}}{2}} \left(\int\limits_{-\frac{\sqrt{2}}{2}}^{\frac{\sqrt{2}}{2}} \left(x^2+y^2\right)^{\frac{k \alpha}{2}}~dy\right)~dx \right)^{\frac{2}{\alpha}} \\
{} \times
\left(\int\limits_{-\frac{\sqrt{2}}{2}}^{\frac{\sqrt{2}}{2}} \left(\int\limits_{-\frac{\sqrt{2}}{2}}^{\frac{\sqrt{2}}{2}} \left(x^2+y^2\right)^{\frac{(p-2)kq}{2(p-q)}}~dy\right)~dx \right)^{\frac{p-q}{q}},
\end{multline*}
where $I=[1,2p/(4K-(2K-1)p))$ and $d=1/q-(\alpha -2) \alpha p$.

\vskip 0.3cm




\vskip 0.3cm

Department of Mathematics, Ben-Gurion University of the Negev, P.O.Box 653, Beer Sheva, 8410501, Israel 
 
\emph{E-mail address:} \email{vladimir@math.bgu.ac.il} \\           
       
 Department of Higher Mathematics and Mathematical Physics, Tomsk Polytechnic University, 634050 Tomsk, Lenin Ave. 30, Russia;
 Department of General Mathematics, Tomsk State University, 634050 Tomsk, Lenin Ave. 36, Russia

 \emph{Current address:} Department of Mathematics, Ben-Gurion University of the Negev, P.O.Box 653, 
  Beer Sheva, 8410501, Israel  
							
 \emph{E-mail address:} \email{vpchelintsev@vtomske.ru}   \\
			  
	Department of Mathematics, Ben-Gurion University of the Negev, P.O.Box 653, Beer Sheva, 8410501, Israel 
							
	\emph{E-mail address:} \email{ukhlov@math.bgu.ac.il

\end{document}